\DeclareMathOperator{\Gr}{Gr}
\DeclareMathOperator{\pr}{pr}
\DeclareMathOperator{\ind}{ind}
\DeclareMathOperator{\Sq}{Sq}
\DeclareMathOperator{\gr}{gr}
\DeclareMathOperator{\rank}{rank}
\DeclareMathOperator{\Spin}{Spin}
\DeclareMathOperator{\SU}{SU}
\DeclareMathOperator{\SO}{SO}
\DeclareMathOperator{\Sp}{Sp}
\DeclareMathOperator{\GL}{GL}
\DeclareMathOperator{\Cl}{Cl}
\DeclareMathOperator{\Diff}{Diff}
\DeclareMathOperator{\BSO}{BSO}
\DeclareMathOperator{\BSpin}{BSpin}
\newcommand\RP{\mathbb R\rm P}
\newcommand\goe{\mathfrak g}
\newcommand\aoe{\mathfrak a}
\newcommand\hoe{\mathfrak h}
\newcommand\Z{\mathbb Z}
\newcommand\R{\mathbb R}
\newcommand\C{\mathbb C}
\newcommand\ii{\mathbf i}
\newcommand\jj{\mathbf j}
\newcommand\kk{\mathbf k}
\newcommand\II{\mathbf I}
\newcommand\JJ{\mathbf J}
\newcommand\KK{\mathbf K}
\newcommand\hol{{\rm hol}}
\theoremstyle{plain}
  \newtheorem{theorem}{Theorem}
  \newtheorem{lemma}{Lemma}
  \newtheorem{corollary}{Corollary}
\theoremstyle{definition}
  \newtheorem{example}{Example}
\theoremstyle{remark}
  \newtheorem*{remark*}{Remark}
\numberwithin{equation}{section}
\begin{document}

\title[Rank two distributions of Cartan type]{On 5-manifolds admitting rank two distributions of Cartan type}

\author{Shantanu Dave}

\address{Shantanu Dave,
         Wolfgang Pauli Institute
         c/o Faculty of Mathematics,
         University of Vienna,
         Oskar-Morgenstern-Platz 1,
         1090 Vienna,
         Austria.}

\email{shantanu.dave@univie.ac.at}

\thanks{S.~D.\ was supported by the Austrian Sciences Fund (FWF) grant P24420.}

\author{Stefan Haller}

\address{Stefan Haller, 
         Max Planck Institute for Mathematics, 
         Vivatsgasse 7,
         53111 Bonn, 
         Germany.}
         
\curraddr{Department of Mathematics,
          University of Vienna,
          Oskar-Morgenstern-Platz 1,
          1090 Vienna,
          Austria.}

\email{stefan.haller@univie.ac.at}

\thanks{The second author would like to express his gratitude to the Max Planck Institute for Mathematics in Bonn for the hospitality during an extended visit when substantial parts of this work were done.}

\keywords{Rank two distributions of Cartan type in dimension five, parabolic geometry, h-principle, mod 2 index, Lutz--Martinez theorem}

\subjclass[2010]{Primary 58A30; Secondary 58J20, 53A40, 53C23, 53C27}

\begin{abstract}
We consider the question whether an orientable $5$-manifold can be equipped with a rank two distribution of Cartan type and what $2$-plane bundles can be realized.
We obtain a complete answer for open manifolds.
In the closed case, we settle the topological part of this problem and present partial results concerning its geometric aspects and new examples.
\end{abstract}

\maketitle


\section{Introduction}\label{S:intro}

Let M be a smooth $5$-manifold, and suppose $\xi\subseteq TM$ is a rank two distribution, i.e., a smooth subbundle of rank two. 
Such a distribution is said to be of Cartan type \cite{BH93} iff it is bracket generating with growth vector $(2,3,5)$, that is, 
iff Lie brackets of vector fields tangential to $\xi$ span a rank three subbundle of $TM$, and triple brackets of vector fields tangential to $\xi$ span all of the tangent bundle. 
These are also known as generic rank two distributions in dimension five \cite{S06a,S08,CS09a,HS11}.

Generic rank two distributions in dimension five have a long history reaching back to Cartan's ``five variables paper'' \cite{C10}. 
The mechanical system of a surface rolling without slipping and twisting on another surface gives rise to a $5$-dimensional configuration
space equipped with a rank two distribution encoding the no slipping and twisting condition. 
This provides a basic example of Cartan type, provided the values of the Gaussian curvatures are disjoint, see \cite[Section 4.4]{BH93}. 
If both surfaces are round spheres, and the ratio of their radii is $1:3$, then the universal (double) covering of this configuration space is isomorphic to the ``flat'' model, a homogeneous  space for the real split form of the exceptional Lie group $G_2$, see \cite{C10,BM05}.
In fact, the geometry of these distributions can equivalently be described as a parabolic geometry of type $(G_2,P)$, where 
$P$ is a particular parabolic subgroup of the exceptional Lie group $G_2$, see \cite{C10,CS09,S08}. 
The most symmetric example, the flat model, is the homogeneous space $G_2/P$ with underlying manifold diffeomorphic to $S^2\times S^3$.

In this paper we will address the existence problem: 
Which $5$-manifolds admit a rank two distribution of Cartan type, and which $2$-plane bundles $\xi$ can be realized? 
Although Cartan type rank two distributions in five dimensions have been extensively studied and are well known for their rich geometry, a systematic approach to their existence seems unavailable in the literature.

Let us recall some of these fascinating geometrical facts.
More than a century ago, Cartan \cite{C10} constructed what is now referred to as the canonical Cartan connection associated with a generic rank two distribution in dimension five that leads to some striking consequences:
The symmetry group of such a distribution is a Lie group of dimension at most $14$ and is subject to further restrictions, see \cite[Theorem~7]{CN09}.
Cartan's harmonic curvature, a section of $S^4\xi^*$, is the complete obstruction to local flatness, that is, the harmonic curvature vanishes if and only if the distribution is locally isomorphic to the flat model $G_2/P$.
Nurowski \cite{N05} constructed a canonical conformal metric of signature $(2,3)$ whose conformal holonomy is contained in $G_2$, see also \cite{S08,CS09a}.
Parabolic geometry also permits to construct differential operators naturally associated with a generic rank two distribution in dimension five and representations of $G_2$, see \cite{CSS01,CS12}.

The approach presented here is to split the problem of existence of Cartan distributions into two steps. 
First we observe that there are immediate topological obstructions.
More precisely, if $\xi$ is an orientable $2$-plane field of Cartan type on $M$, then there exists an isomorphism of vector bundles $TM\cong\xi\oplus\varepsilon^1\oplus\xi$.
In Section~\ref{S:top} we characterize the $5$-manifolds $M$ and $2$-plane bundles $\xi$ which admit such an isomorphism, see Theorem~\ref{T:top}.
It is easy to see that these $5$-manifolds must always be spinnable and their characterization in the open case essentially follows from the fact that the fractional Pontryagin class induces a $5$-equivalence, $\frac{1}{2}p_1\colon\BSpin(5)\to K(\Z,4)$.
In the closed case, the Kervaire semi-characteristic has to vanish according to a result of Atiyah \cite{A70}.
To achieve a corresponding characterization for closed manifolds, we build on K-theoretic results of Atiyah--Dupont \cite{AD72}, a recent generalization due to Tang--Zhang \cite{TZ02}, and verify that the mod 2 index of a certain Dirac operator vanishes.

On open manifolds, if the topological requirements are met then there are no further obstructions to the existence of rank two distributions of Cartan type.
Indeed, being of Cartan type can be considered as an open and $\Diff(M)$ invariant differential relation, hence Gromov's h-principle for open manifolds is applicable.
Eventually, this permits us to obtain a general existence result for open $5$-manifolds, see Theorem~\ref{T:open} in Section~\ref{S:h}.

However, we do not know to what extent rank two distributions of Cartan type on closed $5$-manifolds satisfy the h-principle. 
In particular, it is unclear if every closed $5$-manifold whose tangent bundle can be decomposed as above admits a $2$-plane bundle of Cartan type.
In Section~\ref{S:princ}, using the Lutz--Martinez theorem, we construct some new examples on closed manifolds of the form $\Sigma\times N$ where $\Sigma$ is a surface and $N$ is an orientable $3$-manifold.
If further symmetries are imposed, the h-principle might fail.
For instance, if $\Sigma$ is a closed connected surface of Euler characteristic $\chi(\Sigma)\leq0$, then $\Sigma\times T^3$ does not admit a principal $T^3$-connection whose $2$-plane bundle is of Cartan type, see Lemma~\ref{L:aoe}(a). 
Similar phenomena can be found in Theorem~\ref{T:extension} and Corollary~\ref{C:ext}.

To put these results in perspective, recall that the Lutz--Martinez theorem, see \cite[Theorem 4.3.1]{G08}, asserts that every
cooriented tangent $2$-plane field on a closed oriented 3-manifold is homotopic to a contact structure,
and a result of Geiges \cite{G91} states that a closed simply connected $5$-manifold admits a contact structure in every homotopy class of almost contact structures. 
Generalizing a classification due to Eliashberg \cite{E89}, it has recently been shown \cite{BEM15} that there is a higher dimensional analog of overtwisted contact structures which abide by an h-principle.
Thus, in all dimensions, every almost contact structure is homotopic to an (overtwisted) contact structure.
On the other hand, existence of symplectic structures on closed manifolds is a subtle issue, see \cite[Chapter 11]{EM02}.

Let us also point out a result of Vogel, see \cite{V09}, which asserts that every parallelizable $4$-manifold admits an (orientable) Engel structure.
Recall that an Engel structure on a smooth $4$-manifold is a rank two distribution with growth vector $(2,3,4)$.
Like contact structures, Engel structures admit local normal forms and thus do not have local geometry.

It is conceivable that the canonical Cartan connection and the naturally associated differential operators will help clarify the questions of whether the h-principle holds true for rank two distributions of Cartan type on closed $5$-manifolds.
This appears to be an intriguing open problem.
In a recent preprint, see \cite{DH17}, it has been shown that the BGG sequences associated with a rank two distribution of Cartan type are all hypoelliptic.

We would both like to thank Andreas \v Cap for encouraging conversations.
We would also like to express our gratitude to two anonymous referees for helpful comments and recommendations which, in our opinion, improved the quality of the text considerably.

\section{Almost generic rank two distributions in dimension five}\label{S:top}

Let $M$ be a closed $5$-manifold and suppose $\xi\subseteq TM$ is a rank two distribution of Cartan type.
Hence, Lie brackets of sections of $\xi$ span a rank three subbundle we will denote by $\eta:=[\xi,\xi]\subseteq TM$.
The Lie bracket turns the associated graded bundle, $\gr(TM):=\xi\oplus(\eta/\xi)\oplus(TM/\eta)$, into a bundle of graded nilpotent Lie algebras.
Since $\xi$ is of Cartan type, the non-trivial components of this algebraic (Levi) bracket are bundle isomorphisms: $\Lambda^2\xi\cong\eta/\xi$ and $\xi\otimes(\eta/\xi)\cong TM/\eta$.
Since $TM\cong\gr(TM)$, we obtain an isomorphism of vector bundles,
\begin{equation}\label{E:TMdecom} 
TM\cong\xi\oplus\Lambda^2\xi\oplus(\xi\otimes\Lambda^2\xi).
\end{equation}
Note that $\gr(TM)$ is a locally trivial bundle of nilpotent Lie algebras, the fibers are all isomorphic to the $5$-dimensional nilpotent Lie algebra with basis: $X$, $Y$, $[X,Y]$, $[X,[X,Y]]$, $[Y,[X,Y]]$.

Given an isomorphism as in \eqref{E:TMdecom}, an orientation of $\xi$ determines an orientation of $M$ and vice versa.
In particular, $M$ is orientable iff $\xi$ is. 
In the orientable case $\Lambda^2\xi\cong\varepsilon^1$, hence there exists an isomorphism as in \eqref{E:TMdecom} if and only if
\begin{equation}\label{E:TMdecoms}
TM\cong\xi\oplus\varepsilon^1\oplus\xi,
\end{equation}
where $\varepsilon^1$ denotes the trivial line bundle.

The purpose of this section is to establish the subsequent theorem, providing
necessary and sufficient conditions for the existence of an isomorphism as in \eqref{E:TMdecoms}.

A manifold $M$ will be called spinnable if it admits a spin structure, equivalently, if $w_1(M)=0=w_2(M)$, where $w_i(M)$ denote the Stiefel--Whitney classes.
For a closed connected manifold $M$, we let $k(M)\in\Z_2$ denote its (real) Kervaire semi-characteristic \cite{K56},
$$
k(M)=\sum_{\text{$q$ even}}\dim H^q(M;\R)\mod2.
$$
In view of the Lusztig--Milnor--Peterson formula, see \cite{LMP69}, this coincides with the $\Z_2$ semi-characteristic $k_2(M)=\sum_{\text{$q$ even}}\dim H_q(M;\Z_2)$ for spinnable $M$.
Recall that a manifold is called open if it has no closed connected component.

\begin{theorem}\label{T:top}
a) Let $\xi$ be an orientable $2$-plane bundle over an open $5$-manifold $M$.
Then $TM\cong\xi\oplus\varepsilon^1\oplus\xi$ if and only if $M$ is spinnable and $\tfrac12p_1(M)=e(\xi)^2$.

b) Let $\xi$ be an orientable $2$-plane bundle over a closed connected $5$-manifold $M$.
Then $TM\cong\xi\oplus\varepsilon^1\oplus\xi$ iff $M$ is spinnable, $\tfrac12p_1(M)=e(\xi)^2$, and $k(M)=0$.
\end{theorem}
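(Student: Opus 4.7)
The plan is to interpret $TM\cong\xi\oplus\varepsilon^1\oplus\xi$ as an isomorphism of oriented rank five vector bundles, and to compare $TM$ with $V:=\xi\oplus\varepsilon^1\oplus\xi$ via their classifying maps into $\BSO(5)$, lifted to $\BSpin(5)$ once both bundles are known to be spinnable.

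Necessity is a direct characteristic-class computation: since $\xi$ is orientable, $w_1(V)=0$ and $w_2(V)=2w_2(\xi)=0$, so $M$ must be spinnable, while $p_1(V)=2p_1(\xi)=2e(\xi)^2$ (using the identity $p_1=e^2$ for oriented rank two bundles) gives $\tfrac12 p_1(M)=e(\xi)^2$. In the closed case, the trivial summand provides a nowhere vanishing vector field on $M$, so Atiyah's theorem \cite{A70} forces $k(M)=0$.

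For sufficiency in part (a), the bundle $V$ has the same $w_1$, $w_2$ and $\tfrac12 p_1$ as $TM$. Choose spin structures on both and view their classifying maps as landing in $\BSpin(5)$. Since $\pi_i(\BSpin(5))$ vanishes for $i\le 3$ and $\pi_4(\BSpin(5))\cong\Z$ is detected by $\tfrac12 p_1$, the induced map $\tfrac12 p_1\colon\BSpin(5)\to K(\Z,4)$ is a $5$-equivalence. An open $5$-manifold has the homotopy type of a CW complex of dimension at most four, so this $5$-equivalence induces a bijection on $[M,-]$; hence the two classifying maps into $\BSpin(5)$ are homotopic and $TM\cong V$.

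For part (b), apply (a) over a neighbourhood of the $4$-skeleton of a smooth triangulation of $M$. The remaining obstruction to extending the resulting isomorphism across the top cell lies in $H^5(M;\pi_4(\SO(5)))\cong H^5(M;\Z/2)\cong\Z/2$, and the heart of the argument is to identify it with the Kervaire semi-characteristic $k(M)$. For this one invokes the K-theoretic framework of Atiyah--Dupont \cite{AD72}, generalized by Tang--Zhang \cite{TZ02}, in which this $\Z/2$-obstruction is realized as the mod 2 index of a Dirac operator twisted by the virtual bundle $V-TM$; exploiting the trivial line bundle summand of $V$, this twisted index reduces to the untwisted mod 2 Dirac index, which by Atiyah equals $k(M)$. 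The hypothesis $k(M)=0$ then closes the argument, and this last index-theoretic identification---rather than the elementary obstruction theory underpinning part (a)---is the main obstacle.
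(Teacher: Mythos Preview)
Your treatment of part~(a) is correct and essentially the paper's: both reduce to the $5$-equivalence $\tfrac12 p_1\colon\BSpin(5)\to K(\Z,4)$ together with the fact that an open $5$-manifold has the homotopy type of a $4$-complex.

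In part~(b), however, there are two genuine problems.

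First, your necessity argument for $k(M)=0$ is wrong as stated: the summand $\varepsilon^1$ only gives a \emph{single} nowhere vanishing vector field, and every closed odd-dimensional manifold already has one of those (since $\chi=0$); for instance $k(S^5)=1$. Atiyah's theorem requires an orientable tangent \emph{$2$-plane} field, which here is supplied by $\xi\subset TM$, not by $\varepsilon^1$.

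Second, and more seriously, your sufficiency argument is only a sketch at the decisive step, and the sketch points at the wrong structural feature. The paper does not try to identify the difference obstruction in $H^5(M;\pi_4(\SO(5)))$ directly. Instead it invokes a lemma of Thomas: two stably isomorphic oriented rank five bundles over a closed spinnable $5$-manifold are isomorphic provided \emph{each} admits two linearly independent sections. The hypothesis $k(M)=0$ enters only through Atiyah--Dupont, to give $TM$ two sections. For $V=\xi\oplus\varepsilon^1\oplus\xi$ the paper applies Tang--Zhang's criterion, which asks for $w_4(V)=0$ and $\ind_2(P_V)=0$; the mod~$2$ index vanishes because the orientable $2$-plane $\xi\subset V$ provides a complex structure on $\ker D_V^+$ (Clifford multiplication by the volume element of $\xi$ squares to $-1$ and commutes with $D_V$), forcing the real dimension of the kernel to be even. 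Note that this vanishing has nothing to do with $k(M)$ and nothing to do with the trivial line summand you single out. Your claim that the top obstruction is ``a Dirac index twisted by $V-TM$ which, via the $\varepsilon^1$ summand, reduces to the untwisted index $k(M)$'' does not match what Tang--Zhang actually prove and is not substantiated; as written it is a gap, not a proof.
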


The first part of this theorem will be deduced using elementary homotopy theory,
$\tfrac12p_1(M)\in H^4(M;\Z)$ denotes the first fractional Pontryagin class to be recalled below.
The second part of Theorem~\ref{T:top} is more subtle, the proof we will give rests on results of Thomas \cite{T68},
Atiyah \cite{A70}, Atiyah--Dupont \cite{AD72} and a recent generalization of the latter due to Tang and Zhang \cite{TZ02}.

Let us point out that the Stiefel--Whitney classes of a closed spinnable $5$-manifold are all trivial according to Wu's formula, see \cite[Theorem~11.14]{MS}.
Moreover, since the spin cobordism group $\Omega^{\rm spin}_5$ is trivial, see \cite[page~201]{M63} or \cite{ABP66,ABP67}, every closed spinnable $5$-manifold bounds a compact spinnable $6$-manifold.
Let us also point out that Rokhlin's theorem imposes a restriction on the first Pontryagin class of spinnable $5$-manifolds. 
More precisely, if $M$ is a closed spinnable $5$-manifold, then
\begin{equation}\label{E:rokhlin}
\langle p_1(M),a\rangle\equiv0\mod48,
\end{equation}
for every integral homology class $a\in H_4(M;\Z)$.
Indeed, by Poincar\'e duality, and since $H^1(M;\Z)=[M,S^1]$, every homology class in $H_4(M;\Z)$ can be represented by a closed 4-dimensional submanifold $N$ with trivial normal bundle. 
These submanifolds are spinnable, and $\hat A(N)=-\frac1{24}\langle p_1(N),[N]\rangle=-\frac1{24}\langle p_1(M),[N]\rangle$ is an even integer by Rokhlin's theorem, see \cite[Theorem~IV.1.1]{LM89}.

Using Smale's classification \cite{S62} of closed simply connected spinnable $5$-ma\-ni\-folds, see also \cite{B65} or \cite[Section~2.1]{C11}, we obtain the following explicit description of all simply connected manifolds satisfying the condition in Theorem~\ref{T:top}(b):

\begin{remark*}
For a closed simply connected $5$-manifold $M$ the following are equivalent:
\begin{enumerate}[(a)]
\item 
$TM\cong\xi\oplus\varepsilon^1\oplus\xi$ for one (and then every) $2$-plane bundle $\xi$ over $M$.
\item 
$w_2(M)=0$ and $\rank H_2(M;\Z)$ is odd.
\item 
$M$ is parallelizable.
\item 
There exists a diffeomorphism
$$
M\cong\underbrace{(S^2\times S^3)\sharp\cdots\sharp(S^2\times S^3)}_\text{odd number of summands}\sharp M_{k_1}\sharp\cdots\sharp M_{k_n}
$$ 
where $M_k$ denotes the unique (up to diffeomorphism) closed simply connected
spinnable $5$-manifold with $H_2(M_k;\Z)\cong\Z_k\oplus\Z_k$.
\item 
$M\cong\partial W$ where $W\subseteq\R^6$ is a compact $6$-manifold with boundary which has the homotopy type of a $3$-dimensional simply connected CW complex such that the rank of $H_2(W;\Z)$ is odd and $H_3(W;\Z)=0$.
\end{enumerate}
In this case $\xi\oplus\xi=\varepsilon^4$ for every $2$-plane bundle $\xi$.
Indeed, since $H^3(M,W;\Z)\cong H_3(W;\Z)=0$, the inclusion induces a surjection $H^2(W;\Z)\to H^2(M;\Z)$, hence $\xi$ is the restriction of a $2$-plane bundle $\tilde\xi$ over $W$, and the bundle $\tilde\xi\oplus\tilde\xi$ is trivial since it admits a quaternionic structure and $W$ is
homotopically $3$-dimensional.
\end{remark*}

To discuss the first fractional Pontryagin class, suppose $\sigma$ is a spinnable vector bundle over $X$, i.e.\ $w_1(\sigma)=0=w_2(\sigma)$. 
Then the mod $2$ reduction of its first Pontryagin class vanishes, for it coincides with $w_2(\sigma)^2\in H^4(X;\Z_2)$, see \cite[Problem~15-A]{MS}.
Since $H^4(\BSO(k);\Z)$ is torsionfree,\footnote{For $4<k$, the group $H^4(\BSO(k);\Z)=H^4(\BSO;\Z)\cong\Z$ is generated by $p_1$. 
Moreover, $H^4(\BSO(4);\Z)\cong\Z^2$ with generators $p_1$ and $e$, and $H^4(\BSO(3);\Z)\cong\Z\cong H^4(\BSO(2);\Z)$.} these bundles have a well defined fractional first Pontryagin class, $\tfrac12p_1(\sigma)\in H^4(X;\Z)$, which is natural and stable. The corresponding classifying map, 
\begin{equation}\label{E:12p1}
\tfrac12p_1\colon\BSpin\to K(\Z,4),
\end{equation} 
is an $8$-equivalence. Consequently, two spinnable bundles over a 7-dimensional CW complex are stably isomorphic 
iff they have the same first fractional Pontryagin class, see \cite{DW59}.
We will use the following simple fact repeatedly.

\begin{lemma}\label{L:12p1xixi}
If $\xi$ is an orientable $2$-plane bundle, then $\xi\oplus\xi$ is spinnable and 
$$
\tfrac12p_1(\xi\oplus\xi)=e(\xi)^2.
$$
\end{lemma}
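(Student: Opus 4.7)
The plan is to verify the two claims separately, each reducing to a universal computation that I then transfer via the classifying map for $\xi$.

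For spinnability I would apply the Whitney sum formula with $\Z_2$-coefficients. Since $\xi$ is orientable, $w_1(\xi)=0$, so $w_1(\xi\oplus\xi)=0$. The same formula gives
\[
w_2(\xi\oplus\xi)=w_2(\xi)+w_1(\xi)^2+w_2(\xi)=2w_2(\xi)=0,
\]
so $\xi\oplus\xi$ is spinnable and in particular admits a well-defined fractional Pontryagin class $\tfrac12 p_1(\xi\oplus\xi)\in H^4(X;\Z)$.

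For the Pontryagin identity I would first recall the standard equality $p_1(\xi)=e(\xi)^2$ valid for any oriented rank two real bundle. This follows by viewing $\xi$ as a complex line bundle $L$ with $c_1(L)=e(\xi)$ and computing
\[
p_1(\xi)=-c_2(\xi\otimes_\R\C)=-c_2(L\oplus\bar L)=-c_1(L)\cdot(-c_1(L))=e(\xi)^2.
\]
The usual Whitney sum formula for $p_1$ then yields $p_1(\xi\oplus\xi)=2p_1(\xi)=2e(\xi)^2$.

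The final step is to promote this identity from $p_1$ to $\tfrac12 p_1$. By naturality of both sides under the classifying map of $\xi$, it suffices to verify $\tfrac12 p_1(\gamma\oplus\gamma)=e(\gamma)^2$ for the tautological oriented rank two bundle $\gamma$ over $\BSO(2)$. But $H^4(\BSO(2);\Z)\cong H^4(\CP^\infty;\Z)\cong\Z\cdot e(\gamma)^2$ is torsion-free, so the relation $2\cdot\tfrac12 p_1(\gamma\oplus\gamma)=p_1(\gamma\oplus\gamma)=2\,e(\gamma)^2$ forces the desired equality universally, and pulling back along the classifying map of $\xi$ finishes the proof. The only real subtlety is the a priori ambiguity in ``dividing by two'', and this is exactly what the torsion-freeness of $H^4(\BSO(2);\Z)$ takes care of.
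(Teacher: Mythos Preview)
Your proof is correct and follows essentially the same route as the paper: compute $p_1(\xi\oplus\xi)=2e(\xi)^2$ via a complex structure on $\xi$, then pass to the universal bundle over $\BSO(2)$ and use torsion-freeness of $H^4(\BSO(2);\Z)$ to divide by two. One small caveat: the Whitney sum formula for Pontryagin classes holds only modulo $2$-torsion in general, so your line ``$p_1(\xi\oplus\xi)=2p_1(\xi)$'' is not automatic for arbitrary $\xi$; the paper avoids this by computing $p_1(\xi\oplus\xi)=-2c_2(\xi\oplus\bar\xi)$ directly. In your argument this is harmless, since you only need the identity for the universal bundle $\gamma$ over $\BSO(2)$, where the cohomology is torsion-free and the Whitney formula is exact---and you already invoke precisely this torsion-freeness in the final step. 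Your spinnability argument via the Whitney formula for Stiefel--Whitney classes is slightly more elementary than the paper's reduction of the structure group of $\xi\oplus\bar\xi$ to $\SU(2)$.
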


\begin{proof}
Since $\xi$ is orientable it admits a complex structure. The structure group of $\xi\oplus\bar\xi$ can be reduced to $\SU(2)$, hence $\xi\oplus\xi$ is spinnable.
Moreover, $c_1(\xi\oplus\bar\xi)=0$ and $p_1(\xi\oplus\xi)=-2c_2(\xi\oplus\bar\xi)=2c_1(\xi)^2=2e(\xi)^2$, see \cite[Corollary~15.5]{MS}.
Using the universal bundle over $\BSO(2)$ and the fact that $H^4(\BSO(2);\Z)\cong\Z$ does not have $2$-torsion, we conclude $\tfrac12p_1(\xi\oplus\xi)=e(\xi)^2$.
\end{proof}

We are now in a position to give a proof of Theorem~\ref{T:top}(a).
One implication follows immediately from Lemma~\ref{L:12p1xixi}. For the converse suppose $M$ is spinnable and assume $\tfrac12p_1(M)=e(\xi)^2$.
By Lemma~\ref{L:12p1xixi} the bundle $\xi\oplus\varepsilon^1\oplus\xi$ is spinnable, and 
\begin{equation}\label{E:abc}
\tfrac12p_1(TM)=\tfrac12p_1(\xi\oplus\varepsilon^1\oplus\xi).
\end{equation}
Recall that an open $5$-manifold has the homotopy type of a $4$-dimensional CW complex.
Since the map $\frac12p_1\colon\BSpin(5)\to K(\Z,4)$ is a $5$-equivalence,\footnote{Recall that $\Spin(5)\cong\Sp(2)$ fibers over $S^7$ with typical fiber $\Sp(1)\cong S^3$.} it induces a bijection $[M,\BSpin(5)]\cong[M,K(\Z,4)]=H^4(M;\Z)$.
Combining this with \eqref{E:abc}, we obtain $TM\cong\xi\oplus\varepsilon^1\oplus\xi$, hence the first part of Theorem~\ref{T:top}.

Let us now turn to the proof of the second part of Theorem~\ref{T:top}. One implication follows from Lemma~\ref{L:12p1xixi} and the subsequent special case of a more general result of Atiyah's in $4k+1$ dimensions \cite[Theorem~4.1]{A70}:

\begin{lemma}[Atiyah]\label{L:atiyah}
If an orientable closed $5$-manifold $M$ admits an orientable field of tangent $2$-planes, then $k(M)=0$.
\end{lemma}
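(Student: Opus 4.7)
The plan is to realize $k(M)$ as the mod-$2$ index of a real self-adjoint elliptic operator on $M$ and then to use the orientable $2$-plane field $\xi$ to equip the kernel of that operator with a complex structure, forcing its real dimension to be even.

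First, I would choose a Riemannian metric on $M$ and combine the Hodge star $*$ with $d+d^{*}$ to form a real self-adjoint elliptic operator $A$ whose kernel has real dimension $\sum_{q\textrm{ even}}\dim H^{q}(M;\R)$. This is possible because in the odd dimension $5$ the Hodge star identifies $\Omega^{\textrm{even}}$ with $\Omega^{\textrm{odd}}$ and can be used to symmetrise $d+d^{*}$. By Hodge theory $\dim_{\R}\ker A\equiv k(M)\pmod{2}$, so it would suffice to exhibit a real bundle endomorphism $\widetilde T$ of the domain of $A$ that squares to $-\id$ and commutes with $A$ up to terms vanishing on $\ker A$.

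Next, I would construct $\widetilde T$ from $\xi$. A metric reduction gives $TM=\xi\oplus\xi^{\perp}$, and the orientation of $\xi$ induces a parallel almost complex structure $J$ on $\xi$. Viewed as a section of $\Lambda^{2}\xi^{*}\subseteq\Cl(T^{*}M)$, Clifford multiplication by $J$ defines a zeroth-order endomorphism $T$ on the exterior algebra that preserves parity and satisfies $T^{2}=-\id$. The key algebraic point is that $T$ anticommutes with Clifford multiplication by covectors in $\xi^{\perp}$ and commutes with Clifford multiplication by covectors in $\xi$; combining $T$ with the Hodge-star identification used in building $A$ yields the desired $\widetilde T$. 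The complex structure it induces on $\ker A$ forces $\dim_{\R}\ker A\in 2\Z$, whence $k(M)=0$.

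The main obstacle is the symbolic compatibility of $\widetilde T$ with $A$ on all of $M$, because $\xi$ and $\xi^{\perp}$ are generically not parallel, so the zeroth-order terms introduced by $J$ do not commute with $A$ on the nose. The clean way around this, due to Atiyah, is to note that the mod-$2$ index of a real self-adjoint elliptic operator is a stable homotopy invariant of its principal symbol, so only a symbol-level anticommuting complex structure is required; this in turn reduces to a pointwise algebraic check expressing that the symbol bundle carries a Clifford module structure corresponding to the two directions tangent to $\xi$, exactly the datum an orientable $2$-plane field supplies. Since the lemma is flagged as a special case of \cite[Theorem~4.1]{A70}, the most economical execution is simply to quote that theorem applied to $\xi$.
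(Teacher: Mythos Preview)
Your final sentence---simply quoting \cite[Theorem~4.1]{A70}---is exactly what the paper does: the lemma is stated without proof and attributed to Atiyah.  So at the level of what is actually needed here, your proposal is fine.

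Your sketch of Atiyah's argument, however, contains a genuine error that is worth flagging, because the paper does carry out the correct version of this argument later (Lemma~\ref{L:vanish}) and you will want to understand the difference.  You take $T$ to be Clifford multiplication by the volume element of $\xi$ inside $\Cl(T^*M)$, i.e.\ $T=c(e_1)c(e_2)$ with the \emph{same} Clifford action $c$ that appears in $d+d^*$.  First, the commutation relations are the reverse of what you wrote: this $T$ \emph{commutes} with $c(v)$ for $v\in\xi^\perp$ and \emph{anticommutes} with $c(v)$ for $v\in\xi$.  Second, and more importantly, since the principal symbol of $d+d^*$ is $c(v)$ for arbitrary $v$, this $T$ neither commutes nor anticommutes with that symbol, so no amount of homotopy invariance of the mod~$2$ index will rescue it: even at the symbol level there is no complex structure coming from your $T$.

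The correct construction uses the \emph{second} Clifford action on $\Lambda^*T^*M$, namely $\hat c(v)=\epsilon(v^\flat)+\iota(v)$, which anticommutes with $c$.  Setting $u=\hat c(e_1)\hat c(e_2)$ one gets $u^2=-1$ and $u$ commutes with $c(v)$ for \emph{every} $v$, hence with the symbol of $d+d^*$; choosing a connection for which $u$ is parallel, $u$ then commutes with the operator itself and restricts to a complex structure on its kernel.  This is precisely how the paper argues in the generalization Lemma~\ref{L:vanish}: there the $p$-plane bundle sits in the ``$E$-side'' of $TM\oplus E$, so its Clifford volume element automatically commutes with Clifford multiplication by tangent vectors.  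In the classical case $E=TM$ this second copy furnishes $\hat c$, and the remark preceding Lemma~\ref{L:vanish} identifies $D_E$ with $d+d^*$ and $\ind(D_E)$ with $k(M)$.
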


To show the converse implication in Theorem~\ref{T:top}(b), suppose $M$ is spinnable, $\tfrac12p_1(M)=e(\xi)^2$, and $k(M)=0$.
Note that the bundles $TM$ and $\xi\oplus\varepsilon^1\oplus\xi$ are stably isomorphic. 
Indeed, they are both spinnable with the same fractional Pontryagin class, see Lemma~\ref{L:12p1xixi}, and the map \eqref{E:12p1} is an $8$-equivalence.
A result of Thomas, see \cite[Lemma~3]{T68}, permits to conclude that these two bundles are indeed isomorphic, provided both admit two linearly independent sections.
To apply Thomas' lemma, one also has to check $\Sq^2(u)+w_2(M)u=0$ for all $u\in H^3(M;\Z_2)$, where $\Sq^2$ denotes the second Steenrod square. 
The latter follows from Wu's formula and $w_1(M)=0$.

The tangent bundle does indeed admit two linearly independent sections in view of the following special case of a more general statement for $(4k+1)$-dimensional manifolds due to Atiyah and Dupont \cite{AD72}, see also \cite[Table~2]{T69}.

\begin{lemma}[Atiyah--Dupont]\label{L:atiyah-dupont}
An orientable closed connected $5$-manifold $M$ admits two linearly independent vector fields if and only if $w_4(M)=0$ and $k(M)=0$.
\end{lemma}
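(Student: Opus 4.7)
My plan is to establish the two directions separately, with the forward implication being elementary and the reverse resting on obstruction theory together with a mod-$2$ index identification. For necessity, if $M$ admits two linearly independent vector fields then the tangent bundle splits as $TM\cong\varepsilon^2\oplus\eta$ with $\eta$ an oriented rank-three bundle. Since $w_i(\eta)=0$ for $i>\rank(\eta)=3$, the Whitney sum formula immediately gives $w_4(M)=w_4(\eta)=0$. Moreover, the trivial rank-two subbundle $\varepsilon^2\subseteq TM$ is in particular an orientable tangent 2-plane field, so Lemma~\ref{L:atiyah} forces $k(M)=0$.

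For the converse, the plan is to apply standard obstruction theory to the Stiefel bundle $V_2(TM)\to M$, whose typical fiber is $F:=V_2(\R^5)=\SO(5)/\SO(3)$. Using the unit tangent bundle fibration $S^3\to F\to S^4$, whose Euler number is $2$, one checks that $F$ is $2$-connected with $\pi_3(F)\cong\Z_2$ and $\pi_4(F)\cong\Z_2$. Since $\dim M=5$, only two obstructions to a global section can appear: a primary class in $H^4(M;\Z_2)$ which I would identify with $w_4(M)$ in the usual way, and a secondary class in $H^5(M;\Z_2)\cong\Z_2$. The hypothesis $w_4(M)=0$ annihilates the primary obstruction immediately.

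The main obstacle will be identifying the residual $\Z_2$-valued secondary obstruction with the Kervaire semi-characteristic $k(M)$. Following Atiyah and Dupont \cite{AD72}, I would realize the secondary obstruction as the mod-$2$ analytic index of a suitable Dirac-type operator on $M$ canonically associated with its oriented tangent bundle, and then invoke Atiyah's mod-$2$ index theorem \cite[Theorem~4.1]{A70}---the very result underlying Lemma~\ref{L:atiyah}---to identify this index with $k(M)$. The assumption $k(M)=0$ therefore eliminates the secondary obstruction as well, yielding a global section of $V_2(TM)$ and hence the desired pair of linearly independent vector fields.
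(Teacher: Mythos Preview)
The paper does not give its own proof of this lemma: it is stated with attribution to Atiyah--Dupont \cite{AD72} (with a pointer to \cite[Table~2]{T69}) and used as a black box in the proof of Theorem~\ref{T:top}(b). So there is no in-paper argument to compare against.

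Your forward direction is correct and self-contained: the splitting $TM\cong\varepsilon^2\oplus\eta$ forces $w_4(M)=w_4(\eta)=0$, and the trivial $2$-plane field lets you invoke Lemma~\ref{L:atiyah} to get $k(M)=0$.

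Your reverse direction is a sound outline of the obstruction-theoretic approach, with the homotopy of the fibre $V_2(\R^5)$ computed correctly. You also correctly isolate the one genuinely hard step: identifying the secondary obstruction in $H^5(M;\Z_2)\cong\Z_2$ with $k(M)$. But that identification \emph{is} the substance of the Atiyah--Dupont theorem; at that point your argument is citing the very result you set out to prove, so as a self-contained proof it is circular. (A minor historical remark: the argument in \cite{AD72} is $KR$-theoretic rather than via a Dirac operator; the analytic mod~$2$ index viewpoint you describe is closer in spirit to \cite{A70} and to the later work of Tang--Zhang \cite{TZ02}.) In effect your proposal does what the paper does---defer to \cite{AD72}---together with a useful explanation of why the problem reduces to that single identification.
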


Alternatively, one could combine a result of Thomas', see \cite[Corollary~1.2]{T67} or \cite[Theorem~3(a)]{T68}, with the Lusztig--Milnor--Peterson formula \cite{LMP69} 
to obtain the following weaker statement which would be sufficient for our purpose, see \cite[Theorem~3]{T69}: A closed connected spinnable $5$-manifold $M$ admits two linearly independent vector fields if and only if $k(M)=0$.

To complete the proof of Theorem~\ref{T:top}, it therefore suffices to establish Lemma~\ref{L:spanxixi} below.
We have not been able to find this statement in the literature. 
However, if, in addition, one assumes $w_2(\xi)=0$, then the result follows from Thomas' work, cf.\ the proof of \cite[Theorem~4.1]{T67}.

\begin{lemma}\label{L:spanxixi}
If $\xi$ is an orientable $2$-plane bundle over a closed spinnable $5$-manifold, then $\xi\oplus\varepsilon^1\oplus\xi$ admits two linearly independent sections.
\end{lemma}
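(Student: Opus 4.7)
Set $\sigma := \xi \oplus \varepsilon^1 \oplus \xi$. By Lemma~\ref{L:12p1xixi} this is a spin oriented rank-$5$ bundle. The plan is to exhibit two linearly independent sections by showing that both the primary and secondary obstructions to reducing the structure group of $\sigma$ from $\SO(5)$ to $\SO(3)$ vanish. Since $V_2(\R^5) = \SO(5)/\SO(3)$ is $2$-connected with $\pi_3(V_2(\R^5)) = \Z_2$, the primary obstruction lives in $H^4(M;\Z_2)$ and is given by $w_4(\sigma)$. I would first compute
\[
w_4(\sigma) = w_4(\xi \oplus \xi) = w_2(\xi)^2,
\]
using $w(\xi) = 1 + w_2(\xi)$ (as $\xi$ is oriented of rank $2$).

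The first main step is to show $w_2(\xi)^2 = 0$ via Wu's formula on the spin $5$-manifold $M$. For any $y \in H^1(M;\Z_2) \cong H_4(M;\Z_2)$, Cartan's formula combined with $\Sq^1 w_2(\xi) = w_3(\xi) + w_1(\xi) w_2(\xi) = 0$ and $\Sq^2 y = 0$ gives $\Sq^2(w_2(\xi) \cup y) = w_2(\xi)^2 \cup y$. Pairing with $[M]$ and invoking Wu's formula then yields
\[
\langle w_2(\xi)^2 \cup y, [M]\rangle = \langle v_2 \cdot w_2(\xi) y, [M]\rangle = 0,
\]
because $v_2 = w_2(M) = 0$ on the spin manifold. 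Poincar\'e duality then kills $w_2(\xi)^2$, so two linearly independent sections of $\sigma$ exist on the $4$-skeleton of $M$.

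The remaining secondary obstruction lies in $H^5(M; \pi_4(V_2(\R^5)))$. Using the $\mathrm{KO}$-theoretic framework of Atiyah--Dupont \cite{AD72} as generalized by Tang--Zhang \cite{TZ02}, this obstruction for a spin rank-$5$ bundle over a spin $5$-manifold is to be identified with the mod-$2$ index of a Dirac operator twisted by a virtual bundle depending linearly on the class $[\sigma] - [\varepsilon^5] \in \widetilde{\mathrm{KO}}(M)$. The crucial identity
\[
[\sigma] - [\varepsilon^5] = 2\bigl([\xi] - [\varepsilon^2]\bigr) \quad \text{in } \widetilde{\mathrm{KO}}(M)
\]
will then force the obstruction to be twice a $\Z_2$-valued class, hence zero. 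The hard part will be precisely this last step: pinning down the secondary obstruction as an additive mod-$2$ index so that the evident divisibility by $2$ in $\widetilde{\mathrm{KO}}(M)$ applies. This divisibility is what distinguishes the situation from Atiyah--Dupont's criterion for the tangent bundle, where the analogous invariant is the Kervaire semi-characteristic $k(M)$ and cannot be killed without hypothesis; by contrast $\sigma$ is intrinsically a ``double plus trivial line'' and therefore has vanishing mod-$2$ index for free.
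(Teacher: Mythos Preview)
Your treatment of the primary obstruction $w_4(\sigma)=w_2(\xi)^2=0$ via Wu's formula is essentially the paper's argument.

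The proposed handling of the secondary obstruction, however, has a genuine gap. You want to write the Tang--Zhang mod~$2$ index as $\ind_2(D\otimes V_\sigma)$ with $V_\sigma$ depending linearly on $[\sigma]-[\varepsilon^5]\in\widetilde{\mathrm{KO}}(M)$, so that $[\sigma]-[\varepsilon^5]=2([\xi]-[\varepsilon^2])$ forces vanishing. But $\ind_2(P_E)$ is \emph{not} a stable invariant of $E$, so no such formula can exist. On $S^5$ one has $\widetilde{\mathrm{KO}}(S^5)=0$, hence $[TS^5]=[\varepsilon^5]$ stably, yet $\ind_2(P_{TS^5})=k(S^5)=1$ while $\ind_2(P_{\varepsilon^5})=0$ since the trivial bundle visibly has two sections. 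The operator $P_E$ is built from the irreducible spinor bundle $S(TM\oplus E)$, and its dependence on $E$ is through the Clifford module structure, not through tensoring by a virtual bundle linear in $[E]$; the passage $E\mapsto S(E)$ is not additive.

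The paper's route for this step is different and much more direct. Instead of any additivity, it uses the evident orientable rank~$2$ subbundle $\xi\subseteq\sigma$. Clifford multiplication by its volume form, $u=c(e_1)c(e_2)$, satisfies $u^2=-1$, preserves the grading, and commutes with the Dirac operator $D_\sigma$; hence $u$ is a complex structure on $\ker D_\sigma^+$, so this kernel has even real dimension and $\ind_2(P_\sigma)=0$. This is precisely Atiyah's argument that an orientable tangent $2$-field forces $k(M)=0$, transplanted to the Tang--Zhang setting; the paper records it as Lemma~\ref{L:vanish}, valid whenever $E$ contains an orientable subbundle of rank $\equiv 2\pmod 4$. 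The ``double'' structure of $\sigma$ is thus exploited geometrically inside the Clifford algebra rather than $K$-theoretically.
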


The proof of this lemma is based on the following generalization of Lemma~\ref{L:atiyah-dupont} due to Tang and Zhang, see \cite[Corollary~3.7]{TZ02}.

\begin{lemma}[Tang--Zhang]\label{L:tang-zhang}
Let $E$ be an orientable $5$-plane bundle over a closed connected orientable $5$-manifold $M$ such that $w_2(E)=w_2(M)$.
Then $E$ admits two linearly independent sections if and only if $w_4(E)=0$ and $\ind_2(P_E)=0$.
\end{lemma}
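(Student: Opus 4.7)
\emph{Plan.} The strategy is to apply Lemma~\ref{L:tang-zhang} (Tang--Zhang) to the orientable rank five bundle $E:=\xi\oplus\varepsilon^1\oplus\xi$ over $M$. This reduces the problem to checking the three hypotheses: $w_2(E)=w_2(M)$, $w_4(E)=0$, and $\ind_2(P_E)=0$. The first is immediate, since $M$ is spinnable and Lemma~\ref{L:12p1xixi} implies that $\xi\oplus\xi$ (and hence $E$) is spinnable, so $w_2(E)=w_2(M)=0$.

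For the second hypothesis, I would compute $w(E)$ via the Whitney sum formula. Since $\xi$ is orientable of rank two, $w(\xi)=1+w_2(\xi)$, so
\begin{equation*}
w(E)=w(\xi)^2=1+w_2(\xi)^2\pmod 2,
\end{equation*}
whence $w_4(E)=w_2(\xi)^2$. To see that this class vanishes in $H^4(M;\Z_2)$ I would invoke Poincar\'e duality and test against an arbitrary $\alpha\in H^1(M;\Z_2)$. Using the unstable identity $w_2(\xi)^2=\Sq^2(w_2(\xi))$ and the Cartan formula, the cross terms disappear: $\Sq^2(\alpha)=0$ by degree, and $\Sq^1(w_2(\xi))=w_3(\xi)+w_1(\xi)w_2(\xi)=0$ because $\xi$ is an orientable $2$-plane bundle. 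Thus $\alpha\cup w_2(\xi)^2=\Sq^2(\alpha\cup w_2(\xi))$, which by Wu's formula on $M^5$ evaluates on $[M]$ to $\langle v_2(M)\cup\alpha\cup w_2(\xi),[M]\rangle$, and this vanishes since $v_2(M)=w_2(M)+w_1(M)^2=0$ for the spinnable orientable $M$.

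The third condition, $\ind_2(P_E)=0$, is the main obstacle. Crucially, no assumption is placed on the Kervaire semi-characteristic $k(M)$, so Lemma~\ref{L:atiyah-dupont} applied to $TM$ is unavailable, and the argument must exploit the distinguished structure of $E$. Since $\xi$ is an orientable $2$-plane bundle it admits a complex structure, and as in the proof of Lemma~\ref{L:12p1xixi} the rank four summand $\xi\oplus\xi\cong\xi\oplus\bar\xi$ has vanishing first Chern class; its structure group therefore reduces to $\SU(2)\cong\Sp(1)$, equipping $\xi\oplus\xi$ with a quaternionic structure. The plan is to use this extra symmetry of $E$ to produce a compatible symmetry of the operator $P_E$ defined in \cite{TZ02} which forces its mod two index to vanish. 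Equivalently, one would verify that the stable class $[E]-[\varepsilon^5]=[\xi\oplus\bar\xi]-[\varepsilon^4]\in KO^0(M)$, originating from a complex bundle, lies in the kernel of the mod two index homomorphism attached to $P_E$. The substantive work lies in unpacking the construction of $P_E$ from \cite{TZ02} and exhibiting this symmetry, or in deriving the characteristic class formula for $\ind_2(P_E)$ and reducing its vanishing to the Wu-type identities already used in the preceding paragraph.
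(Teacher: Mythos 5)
There is a genuine gap, and it is structural: your argument does not prove the stated lemma at all. The statement to be established is the Tang--Zhang criterion itself --- for an \emph{arbitrary} orientable $5$-plane bundle $E$ over a closed connected orientable $5$-manifold with $w_2(E)=w_2(M)$, the existence of two linearly independent sections is equivalent to $w_4(E)=0$ and $\ind_2(P_E)=0$. Your proposal instead \emph{applies} this criterion to the particular bundle $E=\xi\oplus\varepsilon^1\oplus\xi$; read as a proof of Lemma~\ref{L:tang-zhang} it is circular, and what it actually sketches is the paper's Lemma~\ref{L:spanxixi}. The paper does not prove Lemma~\ref{L:tang-zhang} either: it is imported from Tang--Zhang \cite[Corollary~3.7]{TZ02}, where the proof requires genuinely different input --- the construction of the skew-adjoint operator $P_E=\JJ D_E$ on the spinor bundle $S(TX\oplus E)$ and the identification of its mod~$2$ analytic index with the top obstruction (in the sense of Atiyah--Dupont \cite{AD72}) to spanning two sections of $E$. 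None of that machinery appears in your write-up, so nothing you have written bears on the equivalence being asserted.

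Even judged as a proof of Lemma~\ref{L:spanxixi}, the argument is incomplete at exactly the point where the content lies. Your verifications of $w_2(E)=w_2(M)=0$ and $w_4(E)=w_2(\xi)^2=0$ are fine and essentially match the paper (the paper kills $\Sq^2\colon H^2(M;\Z_2)\to H^4(M;\Z_2)$ on a closed spinnable $5$-manifold via Wu's formula, the Cartan formula and $\Sq^1\Sq^1=0$; your Poincar\'e-duality test against $H^1$ is the same computation). But for $\ind_2(P_E)=0$ you only announce a ``plan'' to exploit the quaternionic structure on $\xi\oplus\bar\xi$ and concede that ``the substantive work lies in unpacking the construction of $P_E$.'' That work is precisely what the paper's Lemma~\ref{L:vanish} supplies, by a simpler mechanism than the one you propose: Clifford multiplication $u=c(e_1)c(e_2)$ by the volume form of the orientable rank-$2$ subbundle $\xi\subseteq E$ satisfies $u^2=-1$, preserves the grading, and commutes with $D_E$, hence defines a complex structure on $\ker D_E^+$, forcing its real dimension to be even. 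If you intend to complete your version, you must actually exhibit an odd-symmetry or complex structure on the kernel coming from the $\Sp(1)$-reduction of $\xi\oplus\xi$ and check its (anti)commutation with $\ii$, $\JJ$, $\KK$ and $D_E$; as written, the decisive step is missing.
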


Here $\ind_2(P_E)$ denotes the $\Z_2$-index of a certain Dirac operator $P_E$ constructed using the bundle $E$.
Before discussing this operator in detail, let us verify that Lemma~\ref{L:tang-zhang} is applicable to $E=\xi\oplus\varepsilon^1\oplus\xi$ in the situation of Lemma~\ref{L:spanxixi}.
Clearly, $w_2(\xi\oplus\varepsilon^1\oplus\xi)=0=w_2(M)$ in view of Whitney's formula and the spinnability of $M$.
Moreover, $w_4(\xi\oplus\varepsilon^1\oplus\xi)=w_2(\xi)^2=\Sq^2(w_2(\xi))=0$, since the second Steenrod square on a closed spinnable $5$-manifold vanishes completely.
Indeed, by Wu's formula, all Wu classes of $M$ are trivial, hence the Steenrod squares $\Sq^2\colon H^3(M;\Z_2)\to H^5(M;\Z_2)$ and $\Sq^1\colon H^4(M;\Z_2)\to H^5(M;\Z_2)$ both vanish,
and thus the Cartan formula \cite[\S8]{MS} and the Adem relation $\Sq^1\Sq^1=0$ give $0=\Sq^2(xy)=\Sq^2(x)y+\Sq^1(x)\Sq^1(y)=\Sq^2(x)y+\Sq^1(\Sq^1(x)y)=\Sq^2(x)y$,
for all $x\in H^2(M;\Z_2)$ and $y\in H^1(M;\Z_2)$. By Poincar\'e duality, this implies that the (Steenrod) square $\Sq^2\colon H^2(M;\Z_2)\to H^4(M;\Z_2)$ vanishes too.
To prove Lemma~\ref{L:spanxixi} it thus remains to show $\ind_2(P_E)=0$ for the bundle $E=\xi\oplus\varepsilon^1\oplus\xi$.

Let us now describe Tang and Zhang's operator $P_E$ in detail and prove a corresponding vanishing result for its $\Z_2$-index.
Let $X$ be an oriented Riemannian manifold of dimension $4k+1$ and suppose $E$ is an oriented Euclidean vector bundle of rank $4k+1$ over $X$, such that $w_2(E)=w_2(X)$.
Then $TX\oplus E$ is a spinnable vector bundle of rank $8k+2$.
We equip $TX\oplus E$ with the fiberwise Euclidean inner product obtained from the Riemannian metric on $TX$ and the fiberwise Euclidean metric on $E$.
Furthermore, we equip $TX$ with the Levi--Civita connection, we choose a linear connection on $E$ such that the fiberwise Euclidean metric is parallel, and we equip $TX\oplus E$ with the direct sum connection.
In particular, $TX$ and $E$ are orthogonal, and the metric on $TX\oplus E$ is parallel.

Fix a spin structure $\tilde F\to F$ for $TX\oplus E$.
Here $F$ denotes the orthonormal frame bundle of $TX\oplus E$, $\tilde F$ is a principal $\Spin(8k+2)$ bundle over $M$, and $\tilde F\to F$ is a bundle map which is equivariant over the homomorphism of structure groups $\Spin(8k+2)\to\SO(8k+2)$.
Let $S$ denote the irreducible module of the Clifford algebra $\Cl(\R^{8k+2})$, and let $S(TX\oplus E):=\tilde F\times_{\Spin(8k+2)}S$ denote the associated spinor bundle.
Each fiber of $S(TX\oplus E)$ is an irreducible module of the corresponding fiber in the Clifford bundle $\Cl(TX\oplus E):=F\times_{\SO(8k+2)}\Cl(\R^{8k+2})$.
We will denote this Clifford multiplication by $c$.
The linear connection on $TX\oplus E$ provides a principal connection on $F$ which lifts to a principal connection on $\tilde F$.
The latter induce linear connections on $\Cl(TX\oplus E)$ and $S(TX\oplus E)$ such that Clifford multiplication is parallel, $\nabla c=0$, see \cite[Proposition~II.4.11]{LM89}.
Since $\rank(TX\oplus E)\equiv2\mod8$, the module $S$ admits a quaternionic structure which commutes with Clifford multiplication, see \cite[Table~2]{ABS64}.
This induces a quaternionic structure on the spinor bundle which commutes with Clifford multiplication by sections of $\Cl(TX\oplus E)$.
We will denote multiplication with the standard unit quaternions on $S(TX\oplus E)$ by $\ii$, $\jj$, and $\kk$, respectively.
Clearly, the quaternionic structure on the spinor bundle is parallel, i.e., $\nabla\ii=\nabla\jj=\nabla\kk=0$.
Recall that the module $S$ admits a Euclidean inner product such that Clifford multiplication with unit vectors from $\R^{8k+2}\subseteq\Cl(\R^{8k+2})$, as well as multiplication with unit quaternions, are all orthogonal, see \cite[Proposition~I.5.16]{LM89}.
We equip $S(TX\oplus E)$ with the induced fiberwise Euclidean metric.
Clearly, this metric is parallel.
Moreover, $\ii^*=-\ii$, $\jj^*=-\jj$, $\kk^*=-\kk$, $c(Y)^*=-c(Y)$, and $c(e)^*=-c(e)$ for every vector field $Y$ and each section $e$ of $E$.
We denote the corresponding Atiyah--Singer--Dirac operator acting on sections of $S(TX\oplus E)$ by
$$
D_E:=\sum_{i=1}^{4k+1}c(Y_i)\nabla_{Y_i}.
$$
Here $Y_1,\dotsc,Y_{4k+1}$ is a local orthonormal frame of $TX$.
Clearly, $D_E$ commutes with $\ii$, $\jj$, and $\kk$.
Moreover, $D_E$ is formally self-adjoint, see \cite[Proposition~II.5.3]{LM89}.

Deviating slightly from the notation used in \cite{TZ02}, we let $\JJ=c(Y_1)\cdots c(Y_{4k+1})$ denote Clifford multiplication with the volume form of $TX$, and
we let $\KK=c(e_1)\cdots c(e_{4k+1})$ denote Clifford multiplication with the volume form of $E$.
In view of $\JJ^2=-1=\KK^2$ and $\JJ\KK=-\KK\JJ$ these operators generate another quaternionic structure on $S(TX\oplus E)$ which commutes with the one considered previously.
Note that $\II:=\JJ\KK$ is just Clifford multiplication with the volume form of $TX\oplus E$ and satisfies $\II^2=-1$.
Moreover, $\II c(e)=-c(e)\II$, $\JJ c(e)=-c(e)\JJ$, and $\KK c(e)=c(e)\KK$ for all sections $e$ of $E$, while $\II c(Y)=-c(Y)\II$, $\JJ c(Y)=c(Y)\JJ$, and $\KK c(Y)=-c(Y)\KK$, for every vector field $Y$.
Furthermore, $\II^*=-\II$, $\JJ^*=-\JJ$, $\KK^*=-\KK$.
Since the volume elements and Clifford multiplication are parallel, we also have $\nabla\II=\nabla\JJ=\nabla\KK=0$, see \cite[Corollary~II.4.9]{LM89}.
Hence, the Dirac operator $D_E$ commutes with $\JJ$, and anticommutes with $\II$ and $\KK$.
Putting 
$$
S^\pm(TX\oplus E):=\{v\in S(TX\oplus E):\II v=\pm\ii v\},
$$ 
we obtain a decomposition
\begin{equation}\label{E:grading}
S(TX\oplus E)=S^+(TX\oplus E)\oplus S^-(TX\oplus E).
\end{equation}
We consider $S^\pm(TX\oplus E)$ as complex vector bundles where multiplication is given by $\ii$.
Note that $\jj$, $\kk$, $\JJ$, $\KK$, and $c(Y)$ are all of odd degree, with respect to the grading in \eqref{E:grading}, mapping $S^\pm(TX\oplus E)$ into $S^\mp(TX\oplus E)$.
The operator $D_E$ is complex linear, mapping sections of $S^+(TX\oplus E)$ to sections of $S^-(TX\oplus E)$ and vice versa.
Tang and Zhang \cite{TZ02} consider the complex linear skew-adjoint operator $-P_E^*=P_E:=\JJ D_E=D_E\JJ$, and define, cf.\ \cite{AS71},
$$
\ind_2(P_E):=\dim_\C\ker\left(\Gamma(S^+(TX\oplus E))\xrightarrow{P_E}\Gamma(S^+(TX\oplus E))\right)\mod2.
$$

It will be convenient for our purpose to rewrite this index slightly.
To this end, note that $\tau:=\JJ^{-1}\jj$ is a complex antilinear involution which preserves the grading in \eqref{E:grading} and commutes with Clifford multiplication, i.e.\ $\tau^2=1$, $\tau\ii=-\ii\tau$, $\tau\II=-\II\tau$, and $\tau c(Y)=c(Y)\tau$
for every vector field $Y$.
Putting 
$$
S^\pm_\R(TX\oplus E):=\{v\in S^\pm(TX\oplus E):\JJ v=\jj v\},
$$ 
we thus obtain a $\Z_2$-graded real Dirac bundle,
$$
S_\R(TX\oplus E)=S^+_\R(TX\oplus E)\oplus S^-_\R(TX\oplus E),
$$
whose complexification coincides with $S(TX\oplus E)$.
In fact the action of $\jj$ turns $S_\R(TX\oplus E)$ into a $\Z_2$-graded $\Cl_1$-Dirac bundle, see \cite[Definition~II.7.3]{LM89}.
Here we denote the Clifford algebras by $\Cl_q:=\Cl(\R^q)$.
Hence, $\Cl_1\cong\C$.
Since the Dirac operator commutes with $\jj$ and $\JJ$, it restricts to an operator, $D_E\colon\Gamma(S_\R(TX\oplus E))\to\Gamma(S_\R(TX\oplus E))$, which commutes with the $\Cl_1$-action.
Hence, for closed manifolds $X$, its kernel is a finite dimensional $\Z_2$-graded $\Cl_1$-module, giving rise to an analytic index,
$$
\ind(D_E)=[\ker(D_E)]\in\hat{\mathfrak M}_1/i^*\hat{\mathfrak M}_2\cong{\rm KO}^{-1}({\rm pt})\cong\Z_2,
$$
see \cite[Alternative Definition~II.7.4]{LM89}. 
Here $\hat{\mathfrak M}_q$ denotes the Grothendieck group of finite dimensional $\Z_2$-graded $\Cl_q$-modules, and $i^*\colon\hat{\mathfrak M}_{q+1}\to\hat{\mathfrak M}_q$ is the homomorphism
induced by the inclusion, $i\colon\Cl_q\to\Cl_{q+1}$. 
Clearly, this coincides with Tang and Zhang's index, that is,
\begin{equation}\label{E:ind2}
\ind(D_E)=\dim_\R\ker\left(S^+_\R(TX\oplus E)\xrightarrow{D_E^+}S^-_\R(TX\oplus E)\right)=\ind_2(P_E)\in\Z_2.
\end{equation}

Note that in the classical case, $E=TM$, we have $S^\pm_\R(TM\oplus E)=\Lambda_\R^\text{even/odd}T^*M$, $D_E=d+d^*$, and $\ind(D_E)=k(X)$, see \cite{A70} and \cite[Remark~3.6]{TZ02}.

We have the following vanishing result analogous to \cite[Theorem~4.1]{A70}:

\begin{lemma}\label{L:vanish}
Let $E$ be an orientable vector bundle of rank $4k+1$ over an orientable closed manifold $X$ of dimension $4k+1$ such that $w_2(X)=w_2(E)$.
If $E$ admits an orientable subbundle of rank $p\equiv2\mod4$, then $\ind(D_E)=0\in\Z_2$.
\end{lemma}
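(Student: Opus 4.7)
The plan is to mimic Atiyah's proof of the vanishing of the real Kervaire semi-characteristic in \cite[Theorem~4.1]{A70}: we will produce a parallel, grading-preserving, real-linear endomorphism $J$ of $S_\R(TX\oplus E)$ which commutes with $D_E$ and satisfies $J^2=-1$. Any such $J$ restricts to a complex structure on the finite-dimensional real vector space $\ker\bigl(D_E^+\colon\Gamma(S^+_\R(TX\oplus E))\to\Gamma(S^-_\R(TX\oplus E))\bigr)$, so $\dim_\R\ker D_E^+$ is even and $\ind(D_E)=0\in\Z_2$ by \eqref{E:ind2}.

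The natural candidate is Clifford multiplication by the volume element of the orientable subbundle $F\subseteq E$ of rank $p\equiv2\pmod4$. We endow $X$ and $E$ with Riemannian and Euclidean metrics, respectively, use the orthogonal decomposition $E=F\oplus F^\perp$, and choose metric connections on $TX$, $F$ and $F^\perp$ compatible with the orientations; the induced spin connection on $S(TX\oplus E)$ then preserves the splitting $E=F\oplus F^\perp$. Writing $\omega_F\in\Gamma(\Lambda^p F)$ for the oriented unit volume form of $F$, we set $J:=c(\omega_F)$.

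All required properties of $J$ reduce to routine sign accounting in Clifford algebras. Because $p\equiv2\pmod4$, one computes $J^2=(-1)^{p(p+1)/2}=-1$. The form $\omega_F$ is parallel, so $J$ is parallel, and $c(\omega_F)$ commutes with $c(Y)$ for every $Y\in TX$ (using $F\perp TX$ inside $TX\oplus E$ and the evenness of $p$); together these yield $[J,D_E]=0$. Extending an oriented orthonormal frame of $F$ to one of $E$ factorises $\KK$ as $c(\omega_F)\,c(\omega_{F^\perp})$, and evenness of both $p$ and $p(4k+1-p)$ gives $[J,\KK]=0$ and $[J,\JJ]=0$; hence $J$ commutes with $\II=\JJ\KK$ and preserves the $\pm$ grading. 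Finally, $J$ commutes with the quaternionic operators $\ii$, $\jj$, $\kk$ because these commute with every Clifford multiplication by a section of $TX\oplus E$; in particular $J$ commutes with both $\JJ$ and $\jj$, so it preserves the real subbundle $S_\R=\{v:\JJ v=\jj v\}$.

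Combining these observations, $J$ restricts to a real-linear endomorphism of $\Gamma(S^+_\R(TX\oplus E))$ which commutes with $D_E^+$ and squares to $-1$; its further restriction to $\ker D_E^+$ is therefore a complex structure, forcing the real dimension of this kernel to be even, which completes the argument. The only real obstacle is computational, namely organising the signs in the commutation relations of $c(\omega_F)$ with $\JJ$, $\KK$, and with Clifford multiplications by arbitrary vectors; these become transparent once one works with an adapted orthonormal frame of $E$ extending an oriented frame of $F$.
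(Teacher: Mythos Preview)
Your proof is correct and follows the same line as the paper's own argument: Clifford multiplication by the volume element of the orientable rank-$p$ subbundle furnishes a grading-preserving operator squaring to $-1$ that commutes with $D_E$, $\ii$, $\jj$, $\kk$, $\II$, $\JJ$, $\KK$, and hence induces a complex structure on $\ker D_E^+$. If anything, you are more explicit than the paper about choosing a connection respecting the splitting $E=F\oplus F^\perp$ so that $\omega_F$ is parallel, which is what legitimises the step $[J,D_E]=0$; the paper simply asserts this commutation.
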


\begin{proof}
Following Atiyah's argument, we consider Clifford multiplication with the volume form of the $p$-plane bundle, $u:=c(e_1)\cdots c(e_p)$.
Since $p\equiv2\mod4$, we have $u^2=-1$.
Moreover, $u$ commutes with $\ii$, $\jj$, $\kk$, $\II$, $\JJ$, $\KK$, and $c(Y)$, for every vector field $Y$.
In particular, $u$ preserves the grading in \eqref{E:grading} and the real subbundle $S_\R(TX\oplus E)$.
We may assume that the linear connection on $E$ preserves the $p$-plane bundle.
Hence, the volume form of this $p$-plane bundle is parallel and thus $\nabla u=0$.
We conclude that $u$ commutes with $D_E$.
Consequently, $u$ restricts to a complex structure on the kernel of $D_E^+\colon S^+_\R(TX\oplus E)\to S^-_\R(TX\oplus E)$.
Hence, the real dimension of this kernel is even, and $\ind(D_E)=0$, see \eqref{E:ind2}.
\end{proof}

In view of the preceding lemma and \eqref{E:ind2}, the $\Z_2$-index $\ind_2(P_E)$ vanishes for the bundle $E=\xi\oplus\varepsilon^1\oplus\xi$ in the situation of Lemma~\ref{L:spanxixi}.
According to Lemma~\ref{L:tang-zhang} the latter bundle thus has two linearly independent sections.
This proves Lemma~\ref{L:spanxixi} and completes the proof of Theorem~\ref{T:top}.

Let us spell out the following corollary of Tang and Zhang's result, see \cite[Corollary~3.7]{TZ02}, and Lemma~\ref{L:vanish} above:

\begin{corollary}
Let $E$ be an orientable $(4k+1)$-plane bundle over a closed connected orientable $(4k+1)$-manifold $M$ such that $w_2(E)=w_2(M)$.
If $w_{4k}(E)=0$, and $E$ admits an orientable subbundle of rank $p\equiv2\mod4$, then $E$ admits two linearly independent sections.
\end{corollary}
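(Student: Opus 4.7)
The corollary is essentially immediate from the two ingredients developed in this section. The plan is to simply combine Lemma~\ref{L:vanish} with the general $(4k+1)$-dimensional version of the Tang--Zhang criterion, namely \cite[Corollary~3.7]{TZ02}, of which Lemma~\ref{L:tang-zhang} is the $k=1$ case.

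First I would verify the hypotheses of Lemma~\ref{L:vanish}: $E$ is an orientable $(4k+1)$-plane bundle over the closed orientable $(4k+1)$-manifold $M$, $w_2(E) = w_2(M)$, and by assumption $E$ admits an orientable subbundle of rank $p \equiv 2 \pmod 4$. The lemma then gives $\ind(D_E) = 0 \in \Z_2$, which by the identification \eqref{E:ind2} is the same as $\ind_2(P_E) = 0$.

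Second, I would invoke the $(4k+1)$-dimensional Tang--Zhang criterion: under the standing hypotheses on $E$ and $M$, the bundle $E$ admits two linearly independent sections if and only if $w_{4k}(E) = 0$ and $\ind_2(P_E) = 0$. The first condition holds by assumption and the second was just established, so $E$ admits two linearly independent sections, as claimed.

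There is no real obstacle here; the work has been done in Lemma~\ref{L:vanish}. The only point worth commenting on is that both Lemma~\ref{L:vanish} and the construction of $P_E$ and identification \eqref{E:ind2} were set up in the preceding discussion for arbitrary $k$, so nothing in the derivation of $\ind_2(P_E)=0$ depends on $k=1$; only the final step quotes the Tang--Zhang criterion in its full generality rather than the five-dimensional special case stated as Lemma~\ref{L:tang-zhang}.
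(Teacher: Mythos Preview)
Your proposal is correct and takes essentially the same approach as the paper: the paper presents the corollary as an immediate consequence of Tang and Zhang's general criterion \cite[Corollary~3.7]{TZ02} together with Lemma~\ref{L:vanish}, and you have simply spelled this out in detail. Your observation that the construction of $P_E$ and the identification \eqref{E:ind2} were carried out for arbitrary $k$ is exactly the point that makes the argument go through beyond the $5$-dimensional case.
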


Proceeding as in \cite{AD72} it might be possible to  get rid of the orientability assumption provided $TM\oplus E$ is spinnable, cf.\ \cite[Theorem~2.1]{TZ02}, $w_{4k}(E)=0$, and the complement of the $p$-plane bundle is orientable.

\section{The h-principle}\label{S:h}

In this section we will combine Gromov's h-principle for open manifolds \cite{G86} with Theorem~\ref{T:top}(a) above to establish the following existence result:

\begin{theorem}\label{T:open}
Let $M$ be an open spinnable $5$-manifold and suppose $e\in H^2(M;\Z)$ satisfies $e^2=\frac12p_1(M)$.
Then there exists an oriented rank two distribution of Cartan type $\xi\subseteq TM$ with Euler class $e(\xi)=e$.
\end{theorem}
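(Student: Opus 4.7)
The plan is to combine Theorem~\ref{T:top}(a) with Gromov's h-principle for open manifolds, as foreshadowed in the introduction.

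First, since $\BSO(2) \simeq K(\Z,2)$, oriented rank two bundles over $M$ are classified up to isomorphism by their Euler class, so I can choose an oriented $2$-plane bundle $\xi_0 \to M$ with $e(\xi_0) = e$. The hypotheses $w_1(M)=0=w_2(M)$ and $\tfrac12 p_1(M) = e^2 = e(\xi_0)^2$ of Theorem~\ref{T:top}(a) are then satisfied, yielding a vector bundle isomorphism
$$
\Phi \colon TM \xrightarrow{\cong} \xi_0 \oplus \varepsilon^1 \oplus \xi_0.
$$

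Second, I would interpret this isomorphism as a formal solution of the Cartan-type differential relation $\mathcal R$ on sections of $\Gr_2(TM)$. Pointwise, being Cartan type amounts to the algebraic Levi brackets turning the associated graded bundle $\xi \oplus \Lambda^2\xi \oplus (\xi \otimes \Lambda^2\xi)$ into the unique $(2,3,5)$ nilpotent Lie algebra, which is an open condition on the appropriate jet. Using the natural identifications $\Lambda^2\xi_0 \cong \varepsilon^1$ and $\xi_0 \otimes \Lambda^2\xi_0 \cong \xi_0$ together with the model Cartan bracket on this graded Lie algebra, the decomposition on the right-hand side of $\Phi$ carries a tautological formal Cartan structure; transporting via $\Phi$ and lifting to a section of the relevant jet bundle using an auxiliary linear connection on $M$ gives the desired formal solution.

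Third, I would invoke Gromov's h-principle for open, $\Diff(M)$-invariant differential relations over open manifolds \cite{G86}, which converts the formal solution into a holonomic one---an honest rank two distribution $\xi \subseteq TM$ of Cartan type, homotopic through formal solutions to the initial one. Such a homotopy preserves the homotopy class of the underlying section of $\Gr_2(TM)$, and hence the isomorphism class of the underlying oriented $2$-plane subbundle, so $\xi \cong \xi_0$ and therefore $e(\xi) = e$, as required.

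The step I expect to require the most care is the second one: verifying rigorously that the algebraic Cartan bracket data coming from $\Phi$ lifts to a section of the relevant jet bundle whose symbol realizes this bracket. This is essentially a pointwise linear algebra assertion patched globally via a partition of unity, but one must be careful to maintain the non-degeneracy conditions throughout. The remaining ingredients---existence of $\xi_0$, openness and $\Diff$-invariance of $\mathcal R$, and Euler class preservation under the h-principle homotopy---are essentially formal.
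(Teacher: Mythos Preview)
Your overall strategy matches the paper's exactly: choose $\xi_0$ with the prescribed Euler class, apply Theorem~\ref{T:top}(a) to get the decomposition, produce a formal solution of the Cartan-type relation $\mathcal R$, and then invoke Gromov's h-principle for open $\Diff(M)$-invariant relations on open manifolds to obtain a genuine Cartan distribution in the same homotopy class.

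The one place where the paper is more precise than your sketch is the construction of the formal solution. Rather than lifting via an auxiliary connection and patching by partitions of unity, the paper introduces the bundle $P/H$ of reductions of structure group (where $H\subseteq\GL(5)$ is the automorphism group of the graded nilpotent model, see~\eqref{E:H}) and observes that the decomposition $TM\cong\xi_0\oplus\Lambda^2\xi_0\oplus(\xi_0\otimes\Lambda^2\xi_0)$ is literally a section of $P/H$. The Levi bracket defines a natural map $\mathcal L\colon\mathcal R\to P/H$, and the key technical step is Lemma~\ref{L:jets}, which shows by an explicit local computation that $\mathcal L$ is a fiber bundle with contractible fibers. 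This immediately gives the lift to $\Gamma(\mathcal R)$ without any gluing argument, and sidesteps the concern you flagged about maintaining non-degeneracy under convex combinations. Your instinct that this is ``the step requiring the most care'' is correct, and the paper's resolution via $P/H$ is worth internalizing.
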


Suppose $M$ is a smooth $5$-manifold, and let $\mathcal C$ denote the (possibly empty) space of all tangent $2$-plane fields which are of Cartan type.
We consider $\mathcal C$ as a subspace of $\Gamma(\Gr_2(M))$, the space of smooth sections of the Grassmannian bundle of tangent $2$-planes, $\Gr_2(M)\to M$.
To be of Cartan type is a condition on the $2$-jet of a distribution.
Hence, there exists a corresponding (unique) subset $\mathcal R$ in the total space of the $2$-jet bundle, $J^2\Gr_2(M)\to M$, such that
$$
\mathcal C=\bigl\{\xi\in\Gamma(\Gr_2(M)):(j^2\xi)(M)\subseteq\mathcal R\bigr\},
$$
where $j^2\colon\Gamma(\Gr_2(M))\to\Gamma(J^2\Gr_2(M))$ denotes the $2$-jet extension.
We refer to \cite[Chapter~1]{EM02} for basic properties of the jet formalism.
Let $\Gamma(\mathcal R)$ denote the set of all smooth sections of $J^2\Gr_2(M)$ which take values in $\mathcal R$, 
and let $\Gamma_\hol(\mathcal R)$ denote the subset of holonomic solutions, i.e.\ those which are of the form $j^2\xi$ with $\xi\in\Gamma(\Gr_2(M))$.
Consequently, $j^2$ restricts to a homeomorphism
\begin{equation}\label{E:CholR}
\mathcal C\cong\Gamma_\hol(\mathcal R),
\end{equation}
with inverse given by composition with the projection, $q\colon J^2\Gr_2(M)\to\Gr_2(M)$.

Since the differential relation $\mathcal R$ is open and $\Diff(M)$-invariant, the holonomic approximation theorem \cite[Theorem~3.1.1]{EM02} shows 
that $\mathcal R$ satisfies all forms of the local h-principle near any polyhedron of positive codimension, see \cite[Section~7.2.1]{EM02}.
For open manifolds this implies the global h-principle, see \cite[Section~7.2.2]{EM02} or \cite[Section~2.2.2]{G86}.
More precisely, we have the following special case of a result of Gromov's:

\begin{lemma}[Gromov]\label{L:gromov}
If $M$ is an open $5$-manifold, then the differential relation $\mathcal R$ satisfies the parametric h-principle.
In particular, the inclusion $\Gamma_\hol(\mathcal R)\to\Gamma(\mathcal R)$ is a weak homotopy equivalence.
\end{lemma}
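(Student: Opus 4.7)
The plan is to invoke Gromov's h-principle for open, $\Diff(M)$-invariant differential relations on open manifolds, as formulated for instance in \cite[Section~7.2.2]{EM02} or \cite[Section~2.2.2]{G86}. To apply it, I would need to verify two properties of the relation $\mathcal R\subseteq J^2\Gr_2(M)$: that it is an open subset, and that it is invariant under the natural action of $\Diff(M)$ on $J^2\Gr_2(M)$.

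\emph{Openness.} I would argue that the Cartan type condition is a non-degeneracy condition on the $2$-jet. Concretely, for $\xi\in\Gamma(\Gr_2(M))$ and $x\in M$, choose local frames $X,Y$ of $\xi$ near $x$; then the values at $x$ of $X,Y,[X,Y],[X,[X,Y]],[Y,[X,Y]]$ depend only on $j^2_x\xi$, and $\xi$ is of Cartan type at $x$ iff these five vectors form a basis of $T_xM$. The latter is the non-vanishing of a determinant built from $j^2_x\xi$, and is therefore an open condition. It follows that the corresponding subset $\mathcal R\subseteq J^2\Gr_2(M)$ is open.

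\emph{Diff-invariance.} The Cartan type condition is defined intrinsically via Lie brackets of vector fields tangent to the distribution. Since the Lie bracket is natural with respect to diffeomorphisms, and the growth-vector condition transforms correctly, the relation $\mathcal R$ is preserved by the action of $\Diff(M)$ on $J^2\Gr_2(M)$ induced from its action on $\Gr_2(M)$.

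\emph{Application of the h-principle.} With $\mathcal R$ open and $\Diff(M)$-invariant, the holonomic approximation theorem \cite[Theorem~3.1.1]{EM02} yields all forms of the local, parametric, relative $h$-principle near any polyhedron of positive codimension in $M$, see \cite[Section~7.2.1]{EM02}. Since $M$ is an open manifold, it deformation retracts onto such a subpolyhedron of positive codimension; combining this retraction with the local parametric $h$-principle produces the global parametric $h$-principle, as in \cite[Section~7.2.2]{EM02}. In particular, restricting the parametric statement to families indexed by spheres gives that the inclusion $\Gamma_\hol(\mathcal R)\hookrightarrow\Gamma(\mathcal R)$ induces isomorphisms on all homotopy groups, i.e.\ is a weak homotopy equivalence. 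The main step that requires genuine verification is the openness of $\mathcal R$; the rest is a direct citation of Gromov's theorem.
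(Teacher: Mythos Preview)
Your proposal is correct and follows exactly the approach the paper takes: the paper simply states (in the paragraph preceding the lemma) that $\mathcal R$ is open and $\Diff(M)$-invariant, invokes the holonomic approximation theorem \cite[Theorem~3.1.1]{EM02} and the passage from local to global h-principle on open manifolds \cite[Sections~7.2.1--7.2.2]{EM02}, and records the lemma as a special case of Gromov's result without further argument. Your write-up actually supplies more detail than the paper on why openness and $\Diff(M)$-invariance hold.
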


We do not know if $\mathcal R$ satisfies the (parametric) h-principle on closed $5$-manifolds.
In particular, it is unclear to what extent the relative h-principle holds for the pair $(D^5,S^4)$.

Recall that a rank two distribution $\xi\subseteq TM$ of Cartan type gives rise to a filtration of $TM$ by subbundles, and
the Lie bracket induces the structure of a graded nilpotent Lie algebra on every fiber of the associated graded, $\gr(TM)$.
The Lie algebras $\gr_x(TM)$ are all isomorphic to the $5$-dimensional graded nilpotent Lie algebra with basis: $X$, $Y$, $[X,Y]$, $[X,[X,Y]]$, $[Y,[X,Y]]$.
Thus, a rank two distribution of Cartan type provides a reduction of the structure group of $TM$ to
\begin{equation}\label{E:H}
H:=\left\{\left(\begin{smallmatrix}A&*&*\\0&\det(A)&*\\0&0&\det(A)A\end{smallmatrix}\right):A\in\GL(2)\right\}\subseteq\GL(5).
\end{equation}
Equivalently, $\xi$ provides a section of the bundle $P/H=P\times_{\GL(5)}(\GL(5)/H)$ associated to the frame bundle $P$ of $M$. 
Since the Lie algebra $\gr_x(TM)$ only depends on the $2$-jet of $\xi$ at $x\in M$, this is in fact induced from a bundle map,
\begin{equation}\label{E:levi}
\mathcal L\colon\mathcal R\to P/H.
\end{equation}

\begin{lemma}\label{L:jets}
The map $\mathcal L$ is a smooth (locally trivial) fiber bundle with typical fiber diffeomorphic to $\R^{113}$. 
In particular, $\mathcal L$ admits a global smooth cross section, and induces a homotopy equivalence $\Gamma(\mathcal R)\to\Gamma(P/H)$.
\end{lemma}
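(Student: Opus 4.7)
My plan is to analyze $\mathcal L$ fiberwise over $M$ and verify that each fiber is a non-empty affine subspace of the 2-jet space, and hence contractible. The weak-equivalence statement will then follow from the standard principle that a fiber bundle with contractible fibers induces a weak equivalence on section spaces.

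Fix $x\in M$. Unwinding the definition of $H$ in \eqref{E:H}, a point $y\in(P/H)_x$ amounts to a filtration $\xi_x\subseteq\eta_x\subseteq T_xM$ of dimensions two and three, together with isomorphisms $\Lambda^2\xi_x\xrightarrow{\cong}\eta_x/\xi_x$ and $\xi_x\otimes(\eta_x/\xi_x)\xrightarrow{\cong}T_xM/\eta_x$ that make $T_xM$ into a graded nilpotent Lie algebra of the required type. By construction of $\mathcal L$, the fiber $\mathcal L^{-1}(y)$ consists of those 2-jets $j^2_x\xi\in\mathcal R_x$ with $\xi(x)=\xi_x$ whose first and second Levi brackets reproduce the prescribed isomorphisms.

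To see that this fiber is affine, choose coordinates centered at $x$ trivializing $TM$, and a linear complement to $\xi_x$. An open neighborhood of $\xi_x$ in $\Gr_2(T_xM)$ is then identified with $\mathrm{Hom}(\xi_x,T_xM/\xi_x)$ via graphs, and 2-jets of distributions at $x$ with value $\xi_x$ correspond bijectively to the finite-dimensional vector space $V$ of 2-jets at $0$ of maps $f\colon(M,x)\to\mathrm{Hom}(\xi_x,T_xM/\xi_x)$ vanishing at $x$. A direct computation with a local frame of the graph of $f$ shows that the first Levi bracket $\Lambda^2\xi_x\to T_xM/\xi_x$ depends linearly on the 1-jet component of $f$ (essentially as the antisymmetrization of $df_x$ restricted to $\xi_x\otimes\xi_x$), and the second Levi bracket depends linearly on the 2-jet component. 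Thus matching a prescribed symbol imposes affine conditions on $V$, and the fiber is non-empty since any symbol is realized by the flat model $G_2/P$, on which $G_2$ acts transitively. It is therefore a non-empty open affine subspace of $V$, and in particular contractible.

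For local triviality, both $\mathcal R$ and $P/H$ are smooth bundles over $M$, $\mathcal L$ commutes with the projections, and the family of affine fibers varies smoothly with $y$; given a smooth local section $s\colon U\to P/H$, the preimage $\mathcal L^{-1}(s(U))$ is a smooth affine subbundle of $\mathcal R|_U$, which furnishes the trivialization. Once $\mathcal L$ is a smooth fiber bundle with contractible fibers, the induced map $\Gamma(\mathcal R)\to\Gamma(P/H)$ is a weak homotopy equivalence: for each $\sigma\in\Gamma(P/H)$, its preimage is the space of sections of the pullback bundle $\sigma^*\mathcal R\to M$, which has contractible fibers and is therefore contractible by elementary obstruction theory over $M$. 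The main technical ingredient is the linear dependence of the two Levi-bracket symbols on the corresponding jet components; the rest is formal.
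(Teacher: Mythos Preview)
Your overall plan---compute the Levi brackets in a local frame and read off the fiber of $\mathcal L$---is the same as the paper's, but the central claim is wrong: the fiber $\mathcal L^{-1}(y)$ is \emph{not} an affine subspace of the jet space, because the second Levi bracket does \emph{not} depend linearly on the $2$-jet.

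Concretely, with $f(x)=0$ and a frame $X=\partial_1+a^i\partial_{i+2}$, $Y=\partial_2+b^i\partial_{i+2}$ for the graph, one has $[X,Y]|_x=(\partial_1b-\partial_2a)(x)$, which is indeed linear in the first derivatives. But
\[
[X,[X,Y]]\bigr|_x=\bigl(\partial_1^2b-\partial_1\partial_2a\bigr)(x)
+\bigl(\partial_1a^i\cdot\partial_{i+2}b-\partial_1b^i\cdot\partial_{i+2}a\bigr)(x)+\cdots,
\]
and the bilinear terms in first derivatives do not vanish under $f(x)=0$. Since the first Levi condition only pins down $\partial_1b-\partial_2a$ and not the full $1$-jet, the equations cutting out the fiber have the form $L_1(Df)=c_0$ together with $L_2(D^2f)+Q(Df)=(d_0,e_0)$ with $Q$ genuinely quadratic; this is not an affine subspace. (For instance, $a=t\,x_1e_3$, $b=t\,x_3e_3$ gives $[X,[X,Y]]|_0=t^2e_3$.)

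The repair is straightforward and is precisely what the paper does. Projecting to the $1$-jet, the fiber sits over the affine space $\{L_1(Df)=c_0\}$; over each point the remaining condition $L_2(D^2f)=(d_0,e_0)-Q(Df)$ is affine in $D^2f$. So the fiber is an affine bundle over an affine space, hence contractible, though not an affine subspace. The paper encodes this as a ``triangular'' structure: in the explicit formulas $c=a_2-b_1+\bar\phi_3$, $d=a_{12}+b_{11}+\bar\phi_4$, $e=a_{11}+b_{12}+\bar\phi_5$, the corrections $\bar\phi_i$ never involve the distinguished coordinates $a_2-b_1$, $a_{12}+b_{11}$, $a_{11}+b_{12}$. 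A global shear in these coordinates turns $\phi$ into a linear projection and exhibits $\mathcal L$ as a \emph{trivial} bundle with contractible fibers. This explicit trivialization also supplies local triviality over $P/H$, which your ``the affine fibers vary smoothly with $y$'' does not establish (and cannot, once the fibers are no longer affine subspaces).

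Two minor points: the phrase ``non-empty \emph{open} affine subspace'' cannot be what you mean; and invoking the flat model $G_2/P$ for non-emptiness is unnecessary---surjectivity of the linear maps $L_1$, $L_2$ above already shows every symbol is realized.
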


\begin{proof}
Since the statement is local in $M$, we may restrict our attention to open subsets $M\subseteq\R^5$.
Using the coordinate frame we identify
$$
\Gr_2(M)=M\times\Gr_{2,5},\quad J^2\Gr_2(M)=J^2(M,\Gr_{2,5}),\quad P/H=M\times\GL(5)/H.
$$
Here $\Gr_{2,5}$ denotes the Grassmannian of $2$-planes in $\R^5$.
Moreover, $J^2\Gr_2(M)$ denotes the space of $2$-jets of sections of the (trivial) bundle $\Gr_2(M)\to M$ which is being identified with $J^2(M,\Gr_{2,5})$, the space of $2$-jets of maps from $M$ to $\Gr_{2,5}$.

Let $U_0\subseteq\Gr_{2,5}$ denote the open subset of $2$-planes in $\R^5$ which are transverse to the subspace spanned by the unit vectors $e_3,e_4,e_5$,
and let $U\subseteq\Gr_2(M)$ denote the open subset corresponding to $M\times U_0$.
Note that the subgroup
$$
G_0:=\left\{\begin{pmatrix}I_2&0\\B&A\end{pmatrix}:B\in\R^{3\times2},A\in\GL(3)\right\}\subseteq\GL(5)
$$
acts transitively on $p_0^{-1}(U_0)\subseteq\GL(5)/H$, where $p_0\colon\GL(5)/H\to\Gr_{2,5}$ denotes the canonical projection.
Denoting the stabilizer by $N:=G_0\cap H$, we obtain $p_0^{-1}(U_0)=G_0/N$.
Since $N$ is diffeomorphic to $\R^2$, the (principle) bundle $G_0\to G_0/N$ is trivializable.
Taking the product with $M$, we obtain a trivializable smooth fiber bundle with typical fiber $N\cong\R^2$,
$$
\pi\colon M\times G_0\to M\times G_0/N=M\times p_0^{-1}(U_0)=p^{-1}(U).
$$
Here $p\colon P/H\to\Gr_2(M)$ denotes the canonical projection.

We identify $U_0=\R^{3\times2}$ by assigning to a matrix $(a,b)\in\R^{3\times2}$ with columns in $a,b\in\R^3$ the $2$-plane spanned by $(1,0,a^t)^t$ and $(0,1,b^t)^t$. 
Correspondingly, we obtain identifications
$$
U=M\times\R^{3\times2}\qquad\text{and}\qquad q^{-1}(U)=J^2(M,\R^{3\times2}),
$$
where $q\colon J^2\Gr_2(M)\to\Gr_2(M)$ denotes the canonical projection.

Note that $q^{-1}(U)\cap\mathcal R=\mathcal L^{-1}(p^{-1}(U))$, and let
$$
\tilde{\mathcal L}\colon q^{-1}(U)\cap\mathcal R\to p^{-1}(U)
$$
denote the restriction of $\mathcal L$.

Every distribution in $\Gamma(U)\subseteq\Gamma(\Gr_2(M))$ is spanned by two vector fields
\begin{equation}\label{E:ab}
X=\begin{pmatrix}1\\0\\a\end{pmatrix}\qquad\text{and}\qquad Y=\begin{pmatrix}0\\1\\b\end{pmatrix},
\end{equation}
where $a,b\in C^\infty(M,\R^3)$. A straight forward calculation shows that 
\begin{equation}\label{E:cde}
[X,Y]=\begin{pmatrix}0\\0\\c\end{pmatrix},\qquad [X,[X,Y]]=\begin{pmatrix}0\\0\\d\end{pmatrix},\qquad[Y,[X,Y]]=\begin{pmatrix}0\\0\\e\end{pmatrix},
\end{equation}
where $c,d,e\in C^\infty(M,\R^3)$ are of the form:
\begin{align}
\notag
c(x)&=b_1(x)-a_2(x)+\bar\phi_3\bigl(a(x),b(x),\bar D_xa,\bar D_xb\bigr)=:\phi_3(j_x^2(a,b)),
\label{E:phi345}\\
d(x)&=b_{11}(x)-a_{12}(x)+\bar\phi_4\bigl(j_x^1a,j_x^1b,\bar D^2_xa,\bar D_x^2b\bigr)=:\phi_4(j_x^2(a,b)),
\\\notag
e(x)&=b_{12}(x)-a_{22}(x)+\bar\phi_5\bigl(j_x^1a,j^1_xb,\bar D_x^2a,\bar D_x^2b\bigr)=:\phi_5(j_x^2(a,b)).
\end{align}
Here $\bar\phi_i$ depend on the following parts of the $2$-jet as indicated: $a_i:=\frac{\partial a}{\partial x^i}$, $a_{ij}:=\frac{\partial^2a}{\partial x^i\partial x^j}$, $\bar D_xa=(a_3(x),a_4(x),a_5(x))$, $\bar D_x^2a:=(a_{ij}(x))_{1\leq i\leq j\leq 5,3\leq j}$, and similarly for $b$.
Consider the smooth map
\begin{equation}\label{E:phi}
\phi\colon J^2(M,\R^{3\times2})\to M\times\R^{3\times2}\times\R^{3\times3},\quad \phi:=(\phi_0,\phi_1,\phi_2,\phi_3,\phi_4,\phi_5),
\end{equation}
where $\phi_3,\phi_4,\phi_5$ are defined in \eqref{E:phi345}, and $\phi_0,\phi_1,\phi_2$ denote the components of the standard projection, $J^2(M,\R^{3\times2})\to M\times\R^{3\times2}$, that is:
\begin{align*}
x&=:\phi_0(j_x^2(a,b)),\\
a(x)&=:\phi_1(j_x^2(a,b)),\\
b(x)&=:\phi_2(j_x^2(a,b)).
\end{align*} 
Thus, $\phi$ is the unique map such that for all $a,b\in C^\infty(M,\R^3)$ and all $x\in M$,
\begin{equation}\label{E:abcde}
\phi(j^2_x(a,b))=\bigl(x,a(x),b(x),c(x),d(x),e(x)\bigr).
\end{equation}
Clearly, $X$, $Y$, $[X,Y]$, $[X,[X,Y]]$, $[Y,[X,Y]]$ span the tangent space at $x\in M$ iff $c(x),d(x),e(x)$ span $\R^3$.
Hence, we obtain the following local description of $\mathcal R$,
$$
q^{-1}(U)\cap\mathcal R=\phi^{-1}\bigl(M\times\R^{3\times2}\times\GL(3)\bigr).
$$
We let 
$$
\tilde\phi\colon q^{-1}(U)\cap\mathcal R\to M\times\R^{3\times2}\times\GL(3)
$$
denote the restriction of $\phi$.

These maps and identifications are summarized in the following commutative diagram.
The lower central rectangle commutes in view of \eqref{E:ab}, \eqref{E:cde}, and \eqref{E:abcde}, using the obvious identification $\R^{3\times2}\times\GL(3)=G_0$.
$$
\xymatrix@C=3ex{
J^2\Gr_2(M)\ar@/^5ex/[rrr]^-q&\mathcal R\ar@{_{(}->}[l]\ar[r]^-{\mathcal L}&P/H\ar[r]^-p&\Gr_2(M)
\\
q^{-1}(U)\ar@{^{(}->}[u]\ar@{=}[d]&q^{-1}(U)\cap\mathcal R\ar@{_{(}->}[l]\ar[r]^-{\tilde{\mathcal L}}\ar@{=}[d]\ar@{^{(}->}[u]&p^{-1}(U)\ar@{^{(}->}[u]\ar[r]\ar@{=}[d]&U\ar@{^{(}->}[u]\ar@{=}[d]
\\
J^2(M,\R^{3\times2})\ar[d]^-\phi&\phi^{-1}\bigl(M\times\R^{3\times2}\times\GL(3)\bigr)\ar@{_{(}->}[l]\ar[d]^-{\tilde\phi}&M\times G_0/N\ar[r]&M\times\R^{3\times2}
\\
M\times\R^{3\times2}\times\R^{3\times3}&M\times\R^{3\times2}\times\GL(3)\ar@{=}[r]\ar@{_{(}->}[l]&M\times G_0\ar[u]_-\pi
}
$$

We will next show that the map $\phi$ in \eqref{E:phi} is a trivializable smooth fiber bundle with typical fiber diffeomorphic to $\R^{111}$.
To see this, consider the map
$$
\psi\colon J^2(M,\R^{3\times2})\to F:=\R^{3\times5}\times\R^{3\times15}\times\R^{3\times4}\times\R^{3\times13}\cong\R^{111}
$$
defined by
$$
\psi(j^2_x(a,b)):=\Bigl((a_i(x))_{1\leq i\leq5},(a_{ij}(x))_{1\leq i\leq j\leq5},(b_i(x))_{2\leq i\leq5},(b_{ij}(x))_{1\leq i\leq j\leq5,4\leq i+j}\Bigr)
$$
where we are denoting the partial derivatives as above.
Then the map
$$
(\phi,\psi)\colon J^2(M,\R^{3\times2})\to M\times\R^{3\times2}\times\R^{3\times3}\times F
$$
is a diffeomorphism.
Indeed, the formulas in \eqref{E:phi345} permit to write down the inverse mapping explicitly.
Given $\phi(j^2_x(a,b))$ and $\psi(j^2_x(a,b))$, the first equation in \eqref{E:phi345} permits to compute $b_1(x)$ and then, using the other two equations, one obtains $b_{11}(x)$ and $b_{12}(x)$.
Hence, the full $2$-jet $j^2_x(a,b)$ can be recovered from $\phi(j^2_x(a,b))$ and $\psi(j^2_x(a,b))$.
One readily checks that this provides a smooth inverse to $(\phi,\psi)$.
This shows that the map $\phi$ is indeed a trivializable fiber bundle with typical fiber $F\cong\R^{111}$.
We conclude that the restriction $\tilde\phi$ of $\phi$ is a trivializable fiber bundle with typical fiber $\R^{111}$ too.

As composition of the trivializable bundle $\tilde\phi$ with typical fiber $F\cong\R^{111}$ and the trivializable bundle $\pi$ with typical fiber $N\cong\R^2$, the map $\tilde{\mathcal L}$ is a trivializable bundle with typical fiber $N\times F\cong\R^{113}$.
Hence, $\mathcal L$ is locally trivial with typical fiber $\R^{113}$.
In particular, $\mathcal L$ admits a global smooth section $\sigma\colon P/H\to\mathcal R$, see for instance \cite[Theorem~I.5.7]{KN63}.
Since $\mathcal L$ has contractible fibers, $\sigma\circ\mathcal L$ is homotopic to the identity through a smooth homotopy preserving the fibers of $\mathcal L$.
Hence, the map $\Gamma(P/H)\to\Gamma(\mathcal R)$ induced by $\sigma$ is homotopy inverse to the map $\Gamma(\mathcal R)\to\Gamma(P/H)$ induced by $\mathcal L$.
\end{proof}

We conclude this section with a proof of Theorem~\ref{T:open}.
To this end, let $M$ be an open spinnable $5$-manifold and suppose $e\in H^2(M;\Z)$ satisfies $e^2=\frac12p_1(M)$.
Let $\xi$ denote the oriented $2$-plane bundle over $M$ with Euler class $e(\xi)=e$.
In view of Theorem~\ref{T:top}(a), and since $\Lambda^2\xi\cong\varepsilon^1$, we have $TM\cong\xi\oplus\Lambda^2\xi\oplus(\xi\otimes\Lambda^2\xi)$.
Hence, there exists a section of $P/H\to M$ with underlying $2$-plane bundle isomorphic to $\xi$.
This can be lifted over $\mathcal L$ to a section of $\mathcal R\to M$, for the bundle $\mathcal L\colon\mathcal R\to P/H$ admits a global section in view of Lemma~\ref{L:jets}.
According to the h-principle, see Lemma~\ref{L:gromov}, the latter is homotopic to a holonomic section in $\Gamma_\hol(\mathcal R)$.
Consequently, see \eqref{E:CholR}, there exists a rank two distribution of Cartan type on $M$ with underlying $2$-plane bundle isomorphic to $\xi$.
By construction, this tangent $2$-plane bundle has Euler class $e$.

\section{Principal connections of Cartan type}\label{S:princ}

Let $\goe$ be a real three dimensional Lie algebra.
A connection $1$-form $\omega\in\Omega^1(\Sigma;\goe)$ on a surface $\Sigma$ is called of Cartan type if for one (and then every) Lie group $G$ with Lie algebra $\goe$ the corresponding principal connection, regarded as a right $G$-invariant tangent $2$-plane field on $\Sigma\times G$, is of Cartan type.
More explicitly, the $2$-plane field on $\Sigma\times G$ we associate with the $1$-form $\omega$ is spanned by the vector fields of the form $X+\omega(X)$, where $X\in\mathfrak X(\Sigma)$ is a vector field on the surface, and $\omega(X)\in C^\infty(\Sigma;\goe)$ is regarded as a right $G$-invariant vertical vector field.

In this section we will consider the $3$-dimensional abelian Lie algebra $\aoe$ and the $3$-dimensional Heisenberg algebra $\hoe$. 
We will use connection $1$-forms with values in these Lie algebras to equip some closed $5$-manifolds with tangent $2$-plane fields of Cartan type.
More precisely, if $N$ is a closed orientable $3$-manifold and if $\Sigma$ is a closed connected (possibly non-orientable) surface with Euler characteristic $\chi(\Sigma)\geq-1$, then $\Sigma\times N$ admits a tangent $2$-plane field of Cartan type which is transverse to the fibers $*\times N$, see Examples~\ref{Ex:1} and \ref{Ex:2} below.
Recall that this class of surfaces is comprised of: the sphere $S^2$; the projective space $\RP^2$; the torus $T^2$; and the Klein bottle $K$.

We are not able to construct Cartan type distributions on $\Sigma\times N$ if $\chi(\Sigma)<-1$, despite the fact that there are no topological obstructions.
More explicitly, for every closed (possibly non-orientable) surface $\Sigma$ and every closed orientable $3$-manifold $N$ there exists an isomorphism of vector bundles as in \eqref{E:TMdecom} where $M=\Sigma\times N$, $\xi=\pr_1^*T\Sigma$ and $\pr_1\colon M\to\Sigma$ denotes the canonical projection.
Indeed, $TM\cong\xi\oplus\varepsilon^3$ since closed orientable $3$-manifolds are parallelizable according to a theorem of Stiefel.
Moreover, combining Wu's formula $w_2(\Sigma)=w_1(\Sigma)^2$ with well known formulas for Stiefel--Whitney classes\footnote{For every $2$-plane bundle $\xi$ we have $w(\Lambda^2\xi)=1+w_1(\xi)$, $w(\xi\otimes\Lambda^2\xi)=1+w_1(\xi)+w_2(\xi)$, and thus $w(\Lambda^2\xi\oplus(\xi\otimes\Lambda^2\xi))=1+w_1(\xi)^2+w_2(\xi)+w_1(\xi)w_2(\xi)$.}, we see that  all Stiefel--Whitney classes of $\Lambda^2T\Sigma\oplus(T\Sigma\otimes\Lambda^2T\Sigma)$ are trivial.
Hence the structure group of the latter bundle can be reduced to $\Spin(3)$. Consequently, this bundle is trivializable and we obtain $\Lambda^2\xi\oplus(\xi\otimes\Lambda^2\xi)\cong\varepsilon^3$, whence \eqref{E:TMdecom}.

On the other hand, we will show that $\aoe$-valued connection $1$-forms of Cartan type are subject to strong geometric restrictions. 
The only orientable closed connected surface admitting an $\aoe$-valued connection $1$-form of Cartan type is the sphere, see Lemma~\ref{L:aoe}(a) below.
Moreover, it is not always possible to extend an $\aoe$-valued connection $1$-form of Cartan type defined in a neighborhood of the unit circle across the disk, see Theorem~\ref{T:extension} and Corollary~\ref{C:ext} below.

\begin{lemma}\label{L:aoe}
Let $\aoe$ denote the $3$-dimensional abelian Lie algebra.

a) A connection $1$-form $\omega\in\Omega^1(\Sigma;\aoe)$ on a surface $\Sigma$ is of Cartan type iff the curvature $d\omega\in\Omega^2(\Sigma;\aoe)$ is nowhere vanishing and the corresponding map into projective space, $\Sigma\to P\aoe:=(\aoe\setminus0)/\,\R^\times\cong\RP^2$, is an immersion. 
In particular, a closed connected surfaces admitting such a connection $1$-form of Cartan type is diffeomorphic to the $2$-dimensional sphere or projective space.

b) Let $N$ be an orientable closed $3$-manifold and let $\Sigma$ be a compact surface which admits an $\aoe$-valued connection $1$-form of Cartan type.
Then $\Sigma\times N$ admits a tangent $2$-plane field of Cartan type which is transverse to the fibers $*\times N$ and may be chosen arbitrarily close to the flat connection with leaves $\Sigma\times*$ with respect to the $C^\infty$-topology.
\end{lemma}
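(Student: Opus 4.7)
\emph{Part (a).} The strategy is to compute the Levi brackets of the horizontal distribution $\xi\subseteq T(\Sigma\times G)$ of $\omega$ for $G$ any Lie group with Lie algebra $\aoe$. In a local frame $X_1,X_2$ of $T\Sigma$ with $[X_1,X_2]=0$, $\xi$ is spanned by $\tilde X_i=X_i-\omega(X_i)_*$, where $(\cdot)_*$ denotes the vertical fundamental vector field determined by an element of $\aoe$. Because $\aoe$ is abelian, any two vertical fundamental vector fields commute and pullbacks from $\Sigma$ commute with them, so the brackets collapse to
\[
[\tilde X_1,\tilde X_2]=-d\omega(X_1,X_2)_*,\qquad[\tilde X_i,[\tilde X_1,\tilde X_2]]=-\bigl(X_i\cdot d\omega(X_1,X_2)\bigr)_*,
\]
both purely vertical. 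Hence $[\xi,\xi]$ has rank three iff the $\aoe$-valued function $F:=d\omega(X_1,X_2)$ is nowhere zero, and in that case triple brackets exhaust $T(\Sigma\times G)$ iff the three elements $F,X_1F,X_2F\in\aoe$ are linearly independent pointwise. The latter is precisely injectivity of the differential of the induced map $\bar F\colon\Sigma\to P\aoe$, $\sigma\mapsto[F(\sigma)]$. For the topological consequence I would observe that on a closed connected $\Sigma$ such a $\bar F$ is an immersion between surfaces of the same dimension, hence by properness a covering projection onto $\RP^2$; the only connected covers of $\RP^2$ are $\RP^2$ itself and its universal double cover $S^2$.

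\emph{Part (b).} Since $N$ is closed orientable of dimension three it is parallelizable by Stiefel's theorem, so I fix a global framing $Y_1,Y_2,Y_3$ of $TN$, equivalently an $\aoe$-valued coframe identifying $TN\cong N\times\aoe$. For a small parameter $\epsilon>0$ consider the rank two distribution $\xi_\epsilon\subseteq T(\Sigma\times N)$ locally spanned by
\[
\tilde Z_i=X_i-\epsilon\sum_j\omega^j(X_i)Y_j,\qquad i=1,2.
\]
The projection $\xi_\epsilon\to T\Sigma$ is an isomorphism, so $\xi_\epsilon$ is transverse to the fibers $*\times N$ for every $\epsilon$ and converges to the flat connection $T\Sigma\oplus 0$ in the $C^\infty$-topology as $\epsilon\to 0$. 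A bracket computation paralleling part (a) but with extra terms arising from the nonzero structure functions $[Y_j,Y_k]$ yields
\[
[\tilde Z_1,\tilde Z_2]=-\epsilon\sum_k d\omega^k(X_1,X_2)Y_k+\epsilon^2\sum_{j,k}\omega^j(X_1)\omega^k(X_2)[Y_j,Y_k],
\]
together with analogous expressions for $[\tilde Z_i,[\tilde Z_1,\tilde Z_2]]$ whose leading $\epsilon$-terms are $-\epsilon$ times the framing image of $X_iF\in\aoe$. Under the framing isomorphism $\aoe\cong TN$ the leading-order system is precisely $F,X_1F,X_2F$, which part (a) guarantees spans $\aoe$ at every point of $\Sigma$. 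Compactness of $\Sigma\times N$ combined with openness of the Cartan condition then forces $\xi_\epsilon$ to have growth vector $(2,3,5)$ everywhere for all sufficiently small $\epsilon$.

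\emph{Main obstacle.} The substantive analytic point is controlling the non-abelian correction terms involving $[Y_j,Y_k]$, which are $O(1)$ on $N$, against the principal bracket contributions which are only $O(\epsilon)$. Uniform smallness of $\epsilon$ on the compact manifold $\Sigma\times N$ is exactly what turns the leading-order spanning statement into a genuine $(2,3,5)$-growth condition for $\xi_\epsilon$ and, as a bonus, furnishes the required closeness to the flat connection.
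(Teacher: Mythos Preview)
Your proposal is correct and follows essentially the same route as the paper: in part (a) you compute the iterated brackets of the horizontal lifts in a commuting local frame, reduce the Cartan condition to linear independence of $F,X_1F,X_2F$, and identify this with $\bar F$ being an immersion (the paper uses coordinate vector fields and cites Ehresmann's theorem where you invoke properness, but the content is identical); in part (b) you use Stiefel's theorem to frame $TN$, transplant the $\aoe$-valued form via this frame with a small parameter $\varepsilon$, and argue that the $O(\varepsilon^2)$ non-abelian corrections are dominated by the $O(\varepsilon)$ principal terms on the compact manifold---exactly the paper's argument.
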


\begin{proof}
Let $A$ be a Lie group with Lie algebra $\aoe$.
Choose local coordinates on $\Sigma$ and consider the corresponding coordinate vector fields, $\partial_x:=\frac\partial{\partial x}$ and $\partial_y:=\frac\partial{\partial y}$.
Interpreting $\omega(\partial_x)$ and $\omega(\partial_y)$ as right invariant vertical vector fields on $\Sigma\times A$, the corresponding principal connection is locally spanned by the two vector fields
$$
X=\partial_x+\omega(\partial_x)
\quad\text{and}\quad
Y=\partial_y+\omega(\partial_y).
$$
Writing the curvature as $d\omega=F\,dx\,dy$ and considering $F\colon\Sigma\to\aoe$ as a right invariant vertical vector field on $\Sigma\times A$, we have:
$$
[X,Y]=F,\qquad
[X,[X,Y]]=\frac{\partial F}{\partial x},
\quad\text{and}\quad
[Y,[X,Y]]=\frac{\partial F}{\partial y}.
$$
Hence, $\omega$ is of Cartan type iff $F$, $\frac{\partial F}{\partial x}$, and $\frac{\partial F}{\partial y}$ are linearly independent in $\aoe$, at each point in $\Sigma$.
Equivalently, this is the case iff the (locally defined) $F\colon\Sigma\to\aoe$ is nowhere vanishing, and defines an immersion into the projective space $P\aoe$. 
To conclude the proof of (a), recall that every immersion from a closed surface into $\RP^2$ is a covering by Ehresmann's fibration theorem.

To see (b), recall Stiefel's theorem asserting that every orientable closed $3$-manifold is parallelizable, see \cite[Problem~12-B]{MS}.
Hence, there exists a global frame of vector fields $Z_1$, $Z_2$, $Z_3$ on $N$.
Let $\omega\in\Omega^1(\Sigma;\aoe)$ be a connection $1$-form of Cartan type,
and let $\omega^i\in\Omega^1(\Sigma;\R)$ denote its components with respect to some basis of $\aoe$.
For $\varepsilon>0$ consider the $1$-form $\varepsilon\sum_{i=1}^3Z_i\omega^i$ taking values in the Lie algebra of vector fields $\mathfrak X(N)$.
The corresponding connection on the bundle $\Sigma\times N\to\Sigma$ is a $2$-plane bundle $\xi_\varepsilon$ transverse to the fibers $*\times N$.
We will show that this $2$-plane bundle is of Cartan type, provided $\varepsilon$ is sufficiently small.
If $\partial_x=\frac\partial{\partial x}$ and $\partial_y=\frac\partial{\partial y}$ are local coordinate vector fields on $\Sigma$, then the $2$-plane field $\xi_\varepsilon$ is locally spanned by the two vector fields
$$
X=\partial_x+\varepsilon A\qquad\text{and}\qquad
Y=\partial_y+\varepsilon B
$$
where $A:=\sum_{i=1}^3Z_i\omega^i(\partial_x)$ and $B:=\sum_{i=1}^3Z_i\omega^i(\partial_y)$ are considered as vertical vector fields on $\Sigma\times N$.
Writing $F:=\frac{\partial B}{\partial x}-\frac{\partial A}{\partial y}$ we find, as $\varepsilon\to0$:
\begin{align}
\notag
[X,Y]&=\varepsilon F+O(\varepsilon^2),
\\\label{E:tyu}
[X,[X,Y]]&=\varepsilon\tfrac{\partial F}{\partial x}+O(\varepsilon^2)
\\\notag
[Y,[X,Y]]&=\varepsilon\tfrac{\partial F}{\partial y}+O(\varepsilon^2)
\end{align}
Since $\omega$ is of Cartan type, and since $Z_1,Z_2,Z_3$ is a global frame of $N$, the vector fields $F$, $\frac{\partial F}{\partial x}$, $\frac{\partial F}{\partial y}$ constitute a frame of the vertical bundle of $\Sigma\times N\to\Sigma$. 
Using \eqref{E:tyu}, we conclude that $[X,Y]$, $[X,[X,Y]]$, and $[Y,[X,Y]]$ form a frame of the vertical bundle, and thus $\xi_\varepsilon$ is of Cartan type, provided $\varepsilon>0$ is sufficiently small.
These considerations hold true over every compact subset in the domain of the local coordinates on $\Sigma$.
Using the compactness of $\Sigma$, we immediately obtain the global statement.
\end{proof}

\begin{example}[$S^2\times N$]\label{Ex:1}
With respect to a basis of $\aoe$, the expression
\begin{equation}\label{E:s2aoe}
\omega:=\begin{pmatrix}x\\y\\z\end{pmatrix}\times\begin{pmatrix}dx\\dy\\dz\end{pmatrix}
=\begin{pmatrix}ydz-zdy\\zdx-xdz\\xdy-ydx\end{pmatrix}
\end{equation}
defines an $\aoe$-valued connection $1$-form of Cartan type on the unit sphere $S^2\subseteq\R^3$. 
Indeed, this follows from Lemma~\ref{L:aoe}(a), the curvature is of the form $d\omega=F\Omega$ where 
$$
F=2\begin{pmatrix}x\\y\\z
\end{pmatrix},
\quad\text{and}\quad
\Omega=xdydz+ydzdx+zdxdy=\frac{dydz}x=\frac{dzdx}y=\frac{dxdy}z
$$ 
denotes the standard volume form on $S^2$.
Using Lemma~\ref{L:aoe}(b), we conclude that $S^2\times N$ admits a $2$-plane  field of Cartan type which is transverse to the fibers $*\times N$, for every closed orientable $3$-manifold $N$.
Clearly, the Euler class of such a $2$-plane field is Poincar\'e dual to the fibers $*\times N$.
Note that the $1$-form \eqref{E:s2aoe} is invariant under the antipodal map and thus descends to a connection $1$-form on the projective space.
Consequently, $\RP^2\times N$ also admits a $2$-plane field of Cartan type which is transverse to the fibers $*\times N$ for every closed orientable $3$-manifold $N$.
\end{example}

Let $D$ denote the closed unit disk in $\R^2$.

\begin{theorem}\label{T:extension}
Let $\aoe$ denote the $3$-dimensional abelian Lie algebra, and consider the canonical projection $\pi\colon\aoe\setminus0\to\tilde P\aoe:=(\aoe\setminus0)/\R^+\cong S^2$.
Suppose $\omega$ is an $\aoe$-valued connection $1$-form of Cartan type defined in a neighborhood of the unit circle $\partial D\subseteq\R^2$ and let $F$ denote the (nowhere vanishing) $\aoe$-valued smooth function such that $d\omega=Fdxdy$.
Suppose the immersion $\pi F\colon\partial D\to\tilde P\aoe$ is injective, and let $U$ denote the connected component of $\tilde P\aoe\setminus\pi F(\partial D)$ corresponding to the inward pointing radial vector field along $\partial D$.
Then the germ of $\omega$ along $\partial D$ can be extended to a connection $1$-form of Cartan type across $D$ if and only if $\int_{\partial D}\omega$ is contained in the convex hull of $\pi^{-1}(U)$.
\end{theorem}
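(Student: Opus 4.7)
I plan to prove Theorem~\ref{T:extension} by applying Stokes' theorem and then exploiting a topological fact about immersions of $D$ into the sphere $\tilde P\aoe \cong S^2$.

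For the necessity direction, suppose $\tilde\omega \in \Omega^1(D;\aoe)$ is a Cartan-type extension of $\omega$ with $d\tilde\omega = \tilde F\,dx\,dy$. Stokes gives
\[\int_{\partial D}\omega = \int_D \tilde F\,dx\,dy = \int_D r\,(\pi\tilde F)\,dx\,dy,\]
where $r := |\tilde F| > 0$. The crucial step is to show $\pi\tilde F(D) = \bar U$. The map $f := \pi\tilde F\colon D \to S^2$ is an immersion whose boundary restriction is an embedding onto $\partial U$, with a collar of $\partial D$ mapping into $U$ by definition. After a small transverse perturbation, $f^{-1}(\partial U)$ is a smooth $1$-submanifold of $D$ containing $\partial D$; if it had additional closed components in the interior, the connected component of $D\setminus f^{-1}(\partial U)$ adjacent to $\partial D$ would be a compact surface with several boundary circles, all embedded in $\partial U$ by $f$. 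Then $f$ would restrict to a proper local diffeomorphism of this region onto $\bar U$, hence a covering map, necessarily trivial since $\bar U$ is simply connected, forcing the region to be a disjoint union of disks---contradicting the presence of multiple boundary components. Thus $f^{-1}(\partial U) = \partial D$, $f(\mathrm{int}\,D) \subset U$, and $f(D) = \bar U$. Integrating the $U$-valued map $\pi\tilde F|_{\mathrm{int}\,D}$ against the positive measure $r\,dx\,dy$ then produces an element of $\R^+\cdot\mathrm{conv}(U) = \mathrm{conv}(\pi^{-1}(U))$.

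For sufficiency, assume $v := \int_{\partial D}\omega \in \mathrm{conv}(\pi^{-1}(U))$. I would first extend the germ of $F/|F|$ along $\partial D$ to a smooth immersion $\hat F\colon D \to \bar U \subset S^2$, which is possible because $\bar U$ is a standard closed disk. Next, I would construct a smooth positive function $\rho\colon D \to \R^+$ with the same germ as $|F|$ along $\partial D$ such that $\tilde F := \rho\hat F$ satisfies $\int_D \tilde F\,dx\,dy = v$; the flexibility of $\rho$ away from a thin collar of $\partial D$, combined with the openness of $\mathrm{conv}(\pi^{-1}(U))$ and the fact that $\hat F$ sweeps through all of $U$, allows any target integral in the open cone $\mathrm{conv}(\pi^{-1}(U))$ to be realized (the prescribed collar contribution can be absorbed by shrinking the collar). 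Finally, $\tilde\omega$ is obtained by extending $\omega$ arbitrarily to some $\tilde\omega_0$ with curvature $G\,dx\,dy$, observing that $(\tilde F - G)\,dx\,dy$ is compactly supported in $\mathrm{int}\,D$ with zero total integral, hence equals $d\eta$ for a compactly supported $\eta$ by the Poincar\'e lemma, and setting $\tilde\omega := \tilde\omega_0 + \eta$.

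The principal obstacle is the topological claim $\pi\tilde F(D) = \bar U$ above, which rests on the classical fact that a proper local diffeomorphism onto a simply connected target is a trivial covering map. A secondary technical point, handled in the sufficiency construction, is ensuring that $\rho$ matches the prescribed boundary germ while yielding exactly the required integral.
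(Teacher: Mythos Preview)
Your overall architecture matches the paper's: both directions hinge on Stokes' theorem, the topological fact that the immersion $\pi\tilde F\colon D\to S^2$ must actually carry $D$ diffeomorphically onto $\bar U$, and a convexity argument showing that the achievable integrals $\int_D\tilde F\,dx\,dy$ exhaust the open cone $\mathrm{conv}(\pi^{-1}(U))$.

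The one substantive difference is how you establish the topological step in the necessity direction. The paper argues as follows: view $D$ as the upper hemisphere of $S^2$, use the differentiable Sch\"onflies theorem to extend $\pi\tilde F$ across the lower hemisphere to an immersion $S^2\to\tilde P\aoe$, and then invoke Ehresmann's theorem to conclude this extension is a diffeomorphism, whence $\pi\tilde F|_D$ is an embedding onto $\bar U$. Your route---perturb $f$ to be transverse to $\partial U$, analyse $f^{-1}(\partial U)$, and use that a proper local diffeomorphism onto the simply connected $\bar U$ must be a trivial covering---is a legitimate alternative, but it needs more care than you indicate. The perturbation must fix a collar of $\partial D$ (where $f$ is already transverse to $\partial U$), and the conclusion $f(\mathrm{int}\,D)\subset U$ is, a priori, only obtained for the perturbed map; you must pass back to the unperturbed $f$ by a limiting argument together with the observation that an immersion cannot touch $\partial U$ from inside $\bar U$ without crossing it. Also, the interior boundary circles need not be \emph{embedded} in $\partial U$ (they are covers), though this does not affect your covering argument. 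The paper's Sch\"onflies--Ehresmann trick sidesteps all of this in two lines.

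For sufficiency the two arguments are essentially identical: both extend the boundary germ of $\pi F$ to a diffeomorphism $D\to\bar U$ (your ``$\bar U$ is a standard closed disk'' is the Sch\"onflies step), both realize the prescribed integral by rescaling (the paper isolates this as a separate Lemma, proving that $\{\int_D\lambda f\,dx\,dy:\lambda>0,\ \lambda=1\text{ near }\partial D\}$ equals the open cone spanned by $f(B)$, via the fact that two open convex sets with the same closure coincide), and both finish with the compactly supported Poincar\'e lemma.
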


\begin{proof}
Suppose $\int_{\partial D}\omega$ is contained in the convex hull of $\pi^{-1}(U)$.
By the differentiable Sch\"onflies theorem \cite{BK87} the embedding $\pi F\colon\partial D\to\tilde P\aoe$ extends to an embedding of the disk mapping the open unit ball $B$ onto $U$.
Hence, the germ of $F$ along $\partial D$ can be extended to a smooth map $\tilde F\colon D\to\aoe\setminus0$ such that $\pi\tilde F\colon D\to\tilde P\aoe$ is an immersion
and such that the convex cone spanned by $\tilde F(B)$ coincides with the convex hull of $\pi^{-1}(U)$.
According to Lemma~\ref{L:4} below, there exists a positive smooth function $\lambda$ on $D$ such that $\lambda=1$ near $\partial D$, and
$\int_D\lambda\tilde F dxdy=\int_{\partial D}\omega$.
Let $\tilde\omega_0$ be any $1$-form on $D$ which coincides with $\omega$ near $\partial D$.
Then $\lambda\tilde Fdxdy-d\tilde\omega_0$ vanishes in a neighborhood of $\partial D$ and $\int_D\lambda\tilde Fdxdy-d\tilde\omega_0=0$ by Stokes' theorem.
Since the integral induces an isomorphism $H^2(D,\partial D;\R)\cong\R$, we conclude that there exists a $1$-form $\tilde\omega_1$ on $D$ such that $d\tilde\omega_1=\lambda\tilde Fdxdy-d\tilde\omega_0$ and $\tilde\omega_1=0$ near $\partial D$.
Hence, $\tilde\omega:=\tilde\omega_0+\tilde\omega_1$ extends the germ of $\omega$ across $D$ and $d\tilde\omega=\lambda\tilde Fdxdy$.
Clearly, $\pi\lambda\tilde F=\pi\tilde F$, and thus $\tilde\omega$ is of Cartan type, see Lemma~\ref{L:aoe}(a).

To see the other implication, suppose $\tilde\omega$ is a connection $1$-form of Cartan type on $D$ which coincides with $\omega$ in a neighborhood of $\partial D$.
Writing $d\tilde\omega=\tilde Fdxdy$, the map $\pi\tilde F\colon D\to\tilde P\aoe$ is an immersion, see Lemma~\ref{L:aoe}(a).
Actually, this map is an embedding since the restriction to $\partial D$ was assumed to be injective.
Indeed, considering $D$ as the upper hemisphere in the sphere $S^2$, we may use the differentiable Sch\"onflies theorem to extend the map $\pi\tilde F$ across the lower hemisphere to an immersion $S^2\to\tilde P\aoe$, and conclude that this extension has to be an embedding according to Ehresmann's fibration theorem.
We conclude $\pi\tilde F(B)\subseteq U$, and thus the convex cone spanned by $\tilde F(B)$ is contained in the convex hull of $\pi^{-1}(U)$.
Using Stokes' theorem and Lemma~\ref{L:4} below, we see that $\int_{\partial D}\omega=\int_D\tilde Fdxdy$ is contained in the convex hull of $\pi^{-1}(U)$.
\end{proof}

\begin{lemma}\label{L:4}
Consider a smooth map $f\colon D^n\to\R^{n+1}\setminus0$ such that $f/|f|\colon D^n\to S^n$ is an immersion. 
Then the set
\begin{equation}\label{E:44}
\left.\left\{\int_{D^n}\lambda fdx^1\cdots dx^n\right|\text{$\lambda\in C^\infty(D^n)$, $\lambda>0$, and $\lambda=1$ near $\partial D^n$}\right\}
\end{equation}
coincides with the convex cone spanned by $f(B^n)$ in $\R^{n+1}$.
Here $B^n$ denotes the open unit ball in $\R^n$, and $D^n$ denotes the closed unit disk in $\R^n$.
\end{lemma}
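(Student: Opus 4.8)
The plan is to prove that $I=K$, where $I$ denotes the set \eqref{E:44} and $K$ the convex cone spanned by $f(B^n)$, by establishing the two inclusions separately after first isolating the one structural fact on which everything hinges.

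\emph{The key preliminary observation}, and the one place where the immersion hypothesis is genuinely used, is that $K$ is an \emph{open} subset of $\R^{n+1}$. To see this I would pass to polar coordinates $\R^{n+1}\setminus0\cong(0,\infty)\times S^n$, in which the map $(t,p)\mapsto tf(p)$ becomes $(t,p)\mapsto\bigl(t|f(p)|,(f/|f|)(p)\bigr)$; since $f/|f|$ is an immersion between manifolds of the same dimension it is a local diffeomorphism, so this map is a submersion and $\R^+\cdot f(B^n)$ is open. Hence $K=\operatorname{conv}\bigl(\R^+\cdot f(B^n)\bigr)$ is open, being the convex hull of an open set; in particular $f(B^n)$ spans $\R^{n+1}$.

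\emph{For the inclusion $I\subseteq K$}, given an admissible weight $\lambda$ I would write $\int_D\lambda f\,dxdy=m\bar f$, where $m=\int_D\lambda\,dxdy>0$ and $\bar f$ is the barycenter of $f$ with respect to the probability measure $m^{-1}\lambda\,dxdy$; compactness of $f(D^n)$ gives $\bar f\in\operatorname{conv}(f(D^n))$. The step I expect to need the most care is improving this to $\bar f\in\operatorname{relint}\operatorname{conv}(f(D^n))$: because $\lambda$ is strictly positive, for every proper supporting hyperplane $\{\langle\cdot,u\rangle=c\}$ of $\operatorname{conv}(f(D^n))$ the open set $\{x\in D^n:\langle f(x),u\rangle<c\}$ is nonempty and therefore has positive $\lambda\,dxdy$-mass, which forces $\langle\bar f,u\rangle<c$. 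Since $\operatorname{conv}(f(B^n))$ has the same closure, hence the same relative interior, as $\operatorname{conv}(f(D^n))$, it follows that $\bar f$ is a positive convex combination of finitely many values $f(p_i)$ with $p_i\in B^n$, so $\bar f\in K$ and $\int_D\lambda f\,dxdy=m\bar f\in K$.

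\emph{For the reverse inclusion $K\subseteq I$}, I would combine three soft observations. First, $I$ is convex, since any convex combination of admissible weights is admissible. Second, $I$ is open: an admissible $\lambda_0$ may be perturbed to $\lambda_0+\sum_{j=1}^{n+1}s_j\psi_j$ with $\psi_j\in C^\infty_c(B^n)$, still admissible for $|s|$ small, and since $f(B^n)$ spans one can choose the $\psi_j$ so that the vectors $\int_D\psi_jf\,dxdy$ form a basis of $\R^{n+1}$, which exhibits a whole neighbourhood of $\int_D\lambda_0f\,dxdy$ inside $I$. Third, $I$ is dense in $K$: for $v=\sum_it_if(p_i)\in K$ with $t_i>0$ and $p_i\in B^n$, the weights $\lambda_\varepsilon=\sum_it_i\beta_i^\varepsilon+g_\varepsilon$, where $\beta_i^\varepsilon$ is an approximate identity concentrating at $p_i$ and $g_\varepsilon>0$ equals $1$ only on a thin collar of $\partial D$, satisfy $\int_D\lambda_\varepsilon f\,dxdy\to v$. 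Now $K$ is open and contained in the closure $\overline I$, and the interior of the closure of the open convex set $I$ is $I$ itself, so $K=\operatorname{int}K\subseteq\operatorname{int}\overline I=I$. Together with the previous inclusion this yields $I=K$, proving the lemma.
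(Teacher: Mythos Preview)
Your proof is correct and ultimately rests on the same convex-analysis fact as the paper's: an open convex set coincides with the interior of its closure, so two open convex sets with the same closure are equal. The paper's argument exploits this more symmetrically and in fewer lines: rather than establishing the two inclusions separately, it observes that both $\Lambda$ (your $I$) and $C$ (your $K$) are open, convex, and dense in the convex cone spanned by $f(D^n)$, and then invokes the fact just mentioned once to conclude $\Lambda=C$. In that framing your direct barycenter argument for $I\subseteq K$ is not needed, and the density of each set in the common target is almost immediate (for $\Lambda$, via exactly the bump-plus-small-background construction you use for $K\subseteq\overline I$). What your organization buys is explicitness: you spell out why the integral lands in the relative interior, and you verify openness of $I$ carefully, whereas the paper leaves these checks to the reader.
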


\begin{proof}
Let $\Lambda$ denote the set in \eqref{E:44} and write $C$ for the convex cone spanned by $f(B^n)$.
Clearly, both sets are convex.
Moreover, $\Lambda$ and $C$ are both open in $\R^{n+1}$ since $f/|f|$ is assumed to be an immersion.
Furthermore, $\Lambda$ and $C$ are dense subsets of the convex cone spanned by $f(D^n)$.
Since any two open convex sets with the same closure coincide, see \cite[Theorem~6.3]{R70}, we conclude $\Lambda=C$.
\end{proof}

\begin{corollary}\label{C:ext}
There exists an $\aoe$-valued connection $1$-form $\omega$ of Cartan type on the punctured plane $\R^2\setminus0$ which has the following properties:
\begin{enumerate}[(a)]
\item
There does not exist an $\aoe$-valued connection $1$-form of Cartan type on $\R^2$ that coincides with $\omega$ on $\R^2\setminus B$.
\item
For every $\varepsilon>0$ there exists an $\aoe$-valued connection $1$-form of Cartan type on $\R^2$ which coincides with $\omega$ on $\R^2\setminus B_{1+\varepsilon}$. 
\end{enumerate}
Here $B_r\subseteq\R^2$ denotes the open ball of radius $r$ centered at the origin, and $B:=B_1$ denotes the open unit ball.
\end{corollary}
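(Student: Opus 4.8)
The plan is to construct $\omega$ explicitly through its curvature. Identify $\aoe\cong\R^3$ with its standard Euclidean structure, so that $\tilde P\aoe\cong S^2\subseteq\aoe$, and let $e_1,e_2,e_3$ be the standard basis. Writing $d\omega=F\,dx\,dy$, we want the curvature function $F$ to trace, over each circle $\partial B_r\subseteq\R^2\setminus0$, a latitude circle of $S^2$, lying on the equator exactly for $r=1$ and strictly inside an open hemisphere for $r>1$. Theorem~\ref{T:extension} will then yield both assertions once the convex hulls of the relevant cones in $\aoe$ have been identified.

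Concretely, fix a smooth strictly decreasing function $\rho\colon(0,\infty)\to(0,\pi)$ with $\rho(1)=\pi/2$, and in polar coordinates $(r,\theta)$ on $\R^2\setminus0$ set $F(r,\theta):=(\sin\rho(r)\cos\theta,\,\sin\rho(r)\sin\theta,\,\cos\rho(r))\in S^2$. Since $\rho$ is valued in $(0,\pi)$ and $\rho'$ is nowhere zero, $F$ is an immersion. As $H^2(\R^2\setminus0)=0$, there is $\omega_0\in\Omega^1(\R^2\setminus0;\aoe)$ with $d\omega_0=F\,dx\,dy$; replacing $\omega_0$ by $\omega_0+v\,d\theta$ for a suitable $v\in\aoe$ (where $d\theta$ is the angular form, $\int_{\partial D}d\theta=2\pi$) we obtain $\omega$ with $d\omega=F\,dx\,dy$ and $\int_{\partial D}\omega=e_1$. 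By Lemma~\ref{L:aoe}(a), this $\omega$ is of Cartan type on $\R^2\setminus0$: its curvature is nowhere zero and the induced map $\R^2\setminus0\to P\aoe$ is an immersion, as $F$ is. Over $\partial B_r$ the map $\pi F$ traces the latitude circle at colatitude $\rho(r)$, and since $\rho$ is decreasing, $d(\pi F)$ pushes the inward radial field towards the south pole; hence the component $U_r$ of Theorem~\ref{T:extension} is the southern cap, and $\pi^{-1}(U_r)=\{q\in\aoe\setminus0:q_z<\cos\rho(r)\,|q|\}$.

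For part (b), fix $\varepsilon>0$. Then $\rho(1+\varepsilon)<\pi/2$, so $c:=\cos\rho(1+\varepsilon)>0$, and $\pi^{-1}(U_{1+\varepsilon})$ contains $\pm e_1,\pm e_2,-e_3$ as well as $(\pm t,0,1)$ for all large $t$, so $e_3$ — their midpoint — lies in its convex hull. Thus $\operatorname{conv}\pi^{-1}(U_{1+\varepsilon})$ contains a neighbourhood of $0$, and being a convex cone it equals $\aoe$. As $\pi F|_{\partial B_{1+\varepsilon}}$ is an embedded circle, Theorem~\ref{T:extension} applied to $\overline{B_{1+\varepsilon}}$ (its statement and proof are unaffected by an affine rescaling of the disk) shows that the germ of $\omega$ along $\partial B_{1+\varepsilon}$ extends to a Cartan-type $1$-form $\tilde\omega$ on $\overline{B_{1+\varepsilon}}$ agreeing with $\omega$ near $\partial B_{1+\varepsilon}$. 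Gluing $\tilde\omega$ with $\omega|_{\R^2\setminus B_{1+\varepsilon}}$ gives a Cartan-type connection on $\R^2$ coinciding with $\omega$ on $\R^2\setminus B_{1+\varepsilon}$.

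For part (a), suppose $\hat\omega$ is of Cartan type on $\R^2$ with $\hat\omega=\omega$ on $\R^2\setminus B$. Then $\hat\omega=\omega$ on the open set $\{|x|>1\}$, so by continuity the curvature function of $\hat\omega$ agrees with $F$ along $\partial D$; in particular $\pi F_{\hat\omega}|_{\partial D}$ is the equator, traced injectively, and $\int_{\partial D}\hat\omega=e_1$. Since $\hat\omega$ is already Cartan type on all of $\R^2$, it trivially extends the germ of $\hat\omega$ along $\partial D$ across $D$, so the ``only if'' part of Theorem~\ref{T:extension} forces $e_1\in\operatorname{conv}\pi^{-1}(U_{\hat\omega})$. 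But $U_{\hat\omega}$ is one of the two open hemispheres determined by the equator, and $\pi^{-1}$ of each is the already convex half-space $\{q_z>0\}$ or $\{q_z<0\}$ — neither contains $e_1$, a contradiction. The hard part will be the convex-geometry bookkeeping: verifying that $\pi^{-1}$ of a spherical cap strictly larger than a hemisphere has convex hull all of $\aoe$ while for a hemisphere it is only a half-space; the choice $\int_{\partial D}\omega=e_1$ on the equatorial plane $\{q_z=0\}$ is what makes part (a) insensitive to which hemisphere $U_{\hat\omega}$ turns out to be, so that no analysis of $\hat\omega$ inside $B$ is needed.
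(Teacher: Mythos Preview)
Your proof is correct and follows the same strategy as the paper's: build $\omega$ so that its curvature traces latitude circles on $S^2$, with the equator occurring exactly at $r=1$, and then invoke Theorem~\ref{T:extension} for both directions. The paper carries this out via an explicit formula after identifying $\R^2\setminus0$ with the twice-punctured sphere and arranges $\int_{\partial D}\omega$ to have strictly negative third component; your more abstract construction (choose $F$, lift by $H^2=0$, adjust the period) and the choice $\int_{\partial D}\omega=e_1$ on the equatorial plane are pleasant variants---the latter neatly sidesteps determining which hemisphere $U_{\hat\omega}$ is, though in fact smoothness of $\hat\omega$ and agreement on $\{|x|\geq1\}$ already force $U_{\hat\omega}=U_\omega$.
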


\begin{proof}
We identify the punctured plane with the twice punctured unit sphere, $\R^2\setminus0=S^2\cap\{-1<z<1\}$, and $\R^2=S^2\cap\{-1<z\}$.
Fix a constant $\alpha<-1$.
With respect to some fixed basis of $\aoe$, the following expression defines an $\aoe$-valued $1$-form on the twice punctured sphere $S^2\cap\{-1<z<1\}$:
$$
\omega
:=\begin{pmatrix}ydz-zdy\\zdx-xdz\\xdy-ydx\end{pmatrix}
+\frac\alpha{x^2+y^2}\begin{pmatrix}
0\\
0\\
xdy-ydx
\end{pmatrix}
$$
Note the second term is closed and thus $d\omega$ extends smoothly across the punctures, inducing the identical diffeomorphism $F\colon S^2\to\tilde P\aoe=S^2$, cf.\ Example~\ref{Ex:1}.
For $-1<h<1$ we let $D_h$ denote the hemisphere $S^2\cap\{h\leq z\}$.
Note that 
$$
\int_{\partial D_h}\omega=2\pi\begin{pmatrix}0\\0\\1-h^2+\alpha\end{pmatrix}
$$
and thus the third component of this integral is strictly negative, for all $h$.
For $0\leq h<1$ the convex cone spanned by $F(D_h)$ is contained in the half space of vectors with non-negative third component, hence, according to Theorem~\ref{T:extension}, there does not exist a connection $1$-form of Cartan type on $S^2\cap\{-1<z\}$ which coincides with $\omega$ on $S^2\cap\{-1<z\leq h\}$.
This shows (a).
For $h<0$, however, the convex cone spanned by $F(D_h)$ coincides with $\aoe$ and thus there exists a connection $1$-form of Cartan type on $S^2\cap\{-1<z\}$ which coincides with $\omega$ on $S^2\cap\{-1<z\leq h\}$ in view of Theorem~\ref{T:extension}.
This shows (b).
\end{proof}

\begin{lemma}\label{L:hoe}
Let $\hoe$ denote the $3$-dimensional Heisenberg algebra, suppose $\Sigma$ is a compact surface which admits an $\hoe$-valued connection $1$-form of Cartan type, and consider a closed orientable $3$-manifold $N$.
Then $\Sigma\times N$ admits a $2$-plane field of Cartan type which is transverse to the fibers $*\times N$ and may be chosen arbitrarily close to the flat connection with leaves $\Sigma\times*$ with respect to the $C^\infty$-topology.
\end{lemma}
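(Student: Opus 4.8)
The plan is to mimic the proof of Lemma~\ref{L:aoe}(b) almost verbatim, replacing the abelian Lie algebra $\aoe$ by the Heisenberg algebra $\hoe$ and keeping track of the extra nonzero bracket. Since $N$ is a closed orientable $3$-manifold, Stiefel's theorem provides a global frame $Z_1,Z_2,Z_3$ of vector fields on $N$. Given an $\hoe$-valued connection $1$-form $\omega$ of Cartan type on $\Sigma$ with components $\omega^i\in\Omega^1(\Sigma;\R)$ relative to a basis of $\hoe$, I would form, for $\varepsilon>0$, the $\mathfrak X(N)$-valued $1$-form $\varepsilon\sum_{i=1}^3 Z_i\omega^i$ and let $\xi_\varepsilon$ be the corresponding connection on the trivial bundle $\Sigma\times N\to\Sigma$, a $2$-plane field transverse to the fibers $*\times N$ which $C^\infty$-converges to the flat connection with leaves $\Sigma\times*$ as $\varepsilon\to0$.

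The heart of the argument is the bracket computation. Working in local coordinates $x,y$ on $\Sigma$, the field $\xi_\varepsilon$ is spanned by $X=\partial_x+\varepsilon A$ and $Y=\partial_y+\varepsilon B$, where $A:=\sum_i Z_i\omega^i(\partial_x)$ and $B:=\sum_i Z_i\omega^i(\partial_y)$ are viewed as vertical vector fields. The key point is that the $Z_i$ do not commute, so $[A,B]$ no longer vanishes; however, because both $A$ and $B$ are $\varepsilon$-small vertical fields, $[X,Y]=\varepsilon(\tfrac{\partial B}{\partial x}-\tfrac{\partial A}{\partial y})+\varepsilon^2[A,B]=\varepsilon F+O(\varepsilon^2)$, with $F:=\tfrac{\partial B}{\partial x}-\tfrac{\partial A}{\partial y}$ exactly as in Lemma~\ref{L:aoe}(b). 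The same reasoning gives $[X,[X,Y]]=\varepsilon\tfrac{\partial F}{\partial x}+O(\varepsilon^2)$ and $[Y,[X,Y]]=\varepsilon\tfrac{\partial F}{\partial y}+O(\varepsilon^2)$, since each additional bracket only contributes terms of order $\varepsilon^2$ or higher. Thus to leading order the growth-vector condition for $\xi_\varepsilon$ is governed, fiberwise, by linear independence of $F,\tfrac{\partial F}{\partial x},\tfrac{\partial F}{\partial y}$ in the vertical bundle.

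It then remains to identify this leading-order condition with the Cartan-type condition for $\omega$ itself. As in the proof of Lemma~\ref{L:aoe}, writing $d\omega=\hat F\,dx\,dy$ for the $\hoe$-valued curvature function and unwinding definitions, $F=\sum_i Z_i\,\hat F^i$ where $\hat F^i$ are the components of $\hat F$; since $Z_1,Z_2,Z_3$ is a global frame of $TN$, the vectors $F,\tfrac{\partial F}{\partial x},\tfrac{\partial F}{\partial y}$ frame the vertical bundle at a point precisely when $\hat F,\tfrac{\partial\hat F}{\partial x},\tfrac{\partial\hat F}{\partial y}$ are linearly independent in $\hoe$ — which, by the computation in the proof of Lemma~\ref{L:aoe}(a) (valid for any $3$-dimensional Lie algebra, the Levi bracket entering only through lower-order corrections), is exactly the statement that $\omega$ is of Cartan type. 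Consequently $[X,Y],[X,[X,Y]],[Y,[X,Y]]$ frame the vertical bundle for $\varepsilon$ small, so $\xi_\varepsilon$ has growth vector $(2,3,5)$ over any compact coordinate patch; compactness of $\Sigma$ upgrades this to the global statement.

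The only genuinely new point relative to Lemma~\ref{L:aoe}(b) is bookkeeping the noncommutativity of $\hoe$ (equivalently of the $Z_i$) and confirming that it perturbs every relevant bracket only at order $\varepsilon^2$; I expect this to be routine, so there is no real obstacle. One should remark that the analogue of part (a) of Lemma~\ref{L:aoe} — a clean characterization of which surfaces admit an $\hoe$-valued connection $1$-form of Cartan type — is not claimed here, and indeed the Heisenberg case is more flexible because of the extra bracket; the lemma only asserts that whenever such an $\omega$ exists on $\Sigma$, the product $\Sigma\times N$ inherits a Cartan distribution.
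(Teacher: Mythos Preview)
Your argument has a genuine gap at the step where you identify the leading-order condition with the Cartan-type condition for $\omega$. The claim that the computation in Lemma~\ref{L:aoe}(a) is ``valid for any $3$-dimensional Lie algebra, the Levi bracket entering only through lower-order corrections'' is false: the defining computation of Cartan type for $\omega\in\Omega^1(\Sigma;\hoe)$ is carried out on $\Sigma\times H$ (Heisenberg group) with no $\varepsilon$ present, so the bracket terms are \emph{not} lower order there. Concretely, with $X=\partial_x+A$ and $Y=\partial_y+B$ on $\Sigma\times H$ one gets $[X,Y]=G$ with $G=\partial_xB-\partial_yA+[A,B]_\hoe$, and then $[X,[X,Y]]=\partial_xG+[A,G]_\hoe$, $[Y,[X,Y]]=\partial_yG+[B,G]_\hoe$. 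The Cartan-type condition is linear independence of these three, not of $\hat F,\partial_x\hat F,\partial_y\hat F$. The paper's own Example~\ref{Ex:2}, $\omega=(E\cos x+F\sin x)\,dy$, has $A=0$, $\hat F=-E\sin x+F\cos x$, and $\partial_y\hat F=0$, so your ``abelian'' criterion fails outright; yet $\omega$ \emph{is} of Cartan type because $[Y,[X,Y]]=[B,G]_\hoe=[E,F]$ supplies the missing direction. Thus your leading-order analysis on $\Sigma\times N$ never detects the Heisenberg bracket and, for this $\omega$, produces vertical vectors spanning only a $2$-plane at order $\varepsilon$. In effect your argument only reproves Lemma~\ref{L:aoe}(b).

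The missing idea is that one must choose the vertical geometry on $N$ so that the Heisenberg bracket survives the scaling limit. The paper does this by invoking the Lutz--Martinet theorem to put a contact structure with trivial contact plane field $E$ on $N$, identifying $\gr(TN)\cong N\times\hoe$ as bundles of Lie algebras, and then using the \emph{anisotropic} dilation $\rho_\varepsilon$ (weight $\varepsilon$ on $E$, weight $\varepsilon^2$ on $TN/E$) in place of your isotropic $\varepsilon$. With this graded scaling one has $\rho_\varepsilon^{-1}[\rho_\varepsilon V,\rho_\varepsilon W]=\{V,W\}+O(\varepsilon)$, where $\{\,,\,\}$ is the Levi bracket on $\gr(TN)$; the leading-order brackets on $\Sigma\times N$ then become $F=\partial_xB-\partial_yA-\{A,B\}$, $\partial_xF+\{A,F\}$, $\partial_yF+\{B,F\}$, which under the trivialization is exactly the Cartan-type condition for the $\hoe$-valued form $\omega$. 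Without a contact structure on $N$ there is no reason an arbitrary frame $Z_1,Z_2,Z_3$ would make $[Z_1,Z_2]$ transverse to $\sspan\{Z_1,Z_2\}$, which is what you implicitly need.
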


\begin{proof}
Recall that every orientable closed $3$-manifold is parallelizable according to Stiefel's theorem.
Using the Lutz--Martinet theorem, see \cite[Theorem~4.3.1]{G08}, we see that there exists a contact structure on $N$ whose contact plane field, $E$, is trivializable.
We fix a splitting $TN\cong\gr(TN):=E\oplus TN/E$, and let $\rho_\varepsilon$ denote the vector bundle automorphism which scales by $\varepsilon$ on $E$ and $\varepsilon^2$ on $TN/E$.
Denoting the Levi bracket on $\gr(TN)$ by $\{-,-\}$, we have
\begin{equation}\label{E:fghj}
\rho_\varepsilon^{-1}[\rho_\varepsilon V,\rho_\varepsilon W]=\{V,W\}+O(\varepsilon),\qquad\text{as $\varepsilon\to0$,}
\end{equation}
for any two vector fields $V$ and $W$ on $N$.

Recall that every connection on the bundle $\Sigma\times N\to\Sigma$ can be described by a $1$-form $\omega\in\Omega^1(\Sigma;\mathfrak X(N))$ taking values in the Lie algebra of vector fields.
For $\varepsilon>0$, consider the scaled connection $1$-form $\omega_\varepsilon:=\rho_\varepsilon\omega$, and let $\xi_\varepsilon$ denote the corresponding $2$-plane field on $\Sigma\times N$.
If $\partial_x$ and $\partial_y$ are local coordinate vector fields on $\Sigma$, then the $2$-plane field $\xi_\varepsilon$ is locally spanned by the two vector fields $X_\varepsilon=\partial_x+\rho_\varepsilon A$ and $Y_\varepsilon=\partial_y+\rho_\varepsilon B$, where $A:=\omega(\partial_x)$ and $B:=\omega(\partial_y)$.
Putting $F:=\frac{\partial B}{\partial x}-\frac{\partial A}{\partial y}+\{A,B\}$, and using \eqref{E:fghj}, one readily verifies:
\begin{align}
\notag
\rho_\varepsilon^{-1}[X_\varepsilon,Y_\varepsilon]&=F+O(\varepsilon)
\\\label{E:aassdd}
\rho_\varepsilon^{-1}[X_\varepsilon,[X_\varepsilon,Y_\varepsilon]]
&=\tfrac{\partial F}{\partial x}+\{A,F\}+O(\varepsilon)
\\\notag
\rho_\varepsilon^{-1}[Y_\varepsilon,[X_\varepsilon,Y_\varepsilon]]
&=\tfrac{\partial F}{\partial y}+\{B,F\}+O(\varepsilon)
\end{align}

Suppose $\omega$ is obtained from an $\hoe$-valued connection $1$-form of Cartan type via 
$$
\hoe\subseteq C^\infty(N,\hoe)\cong\Gamma(\gr(TN))\cong\mathfrak X(N)
$$
where the first inclusion is by constant functions, and the middle isomorphism of Lie algebras is induced by some trivialization of bundles of Lie algebras $N\times\hoe\cong\gr(TN)$.
Then $F$, $\tfrac{\partial F}{\partial x}+\{A,F\}$, and $\tfrac{\partial F}{\partial y}+\{B,F\}$ are pointwise linearly independent in $\hoe$.
Using \eqref{E:aassdd}, we conclude that the $2$-plane field $\xi_\varepsilon$ is of Cartan type, provided $\varepsilon$ is sufficiently small.
These considerations hold true over every compact subset in the domain of the local coordinates on $\Sigma$.
Using the compactness of $\Sigma$, we immediately obtain the global statement.
\end{proof}

\begin{example}[$T^2\times N$]\label{Ex:2}
Let $E,F,[E,F]$ be a basis of the Heisenberg algebra $\hoe$.
The expression
$$
\omega=\bigl(E\cos(x)+F\sin(x)\bigr)dy
$$
defines an $\hoe$-valued connection $1$-form of Cartan type on $\R^2$ which is periodic and descends to a connection $1$-form of Cartan type on the torus $T^2=\R^2/(2\pi\Z)^2$.
Using Lemma~\ref{L:hoe}, we conclude that $T^2\times N$ admits a tangent $2$-plane field of Cartan type which is transverse to the fibers $*\times N$,
for every closed orientable $3$-manifold $N$.
The Euler class of this $2$-plane bundle is trivial.
Note that $\omega$ is also invariant under the transformation $(x,y)\mapsto(x+\pi,-y)$ and thus descends to an $\hoe$-valued connection $1$-form on the Klein bottle, $K$. 
Consequently, $K\times N$ also admits a tangent $2$-plane field of Cartan type transverse to the fibers $*\times N$.
\end{example}

\end{document}